\def \msp {\vspace{-1ex}}
\newtheorem{thm}{Theorem}[section]
\newtheorem{lemma}[thm]{Lemma}
\def \cF {{\cal F}}
\def \cG {{\cal G}}
\def \cH {{\cal H}}
\def \cP {{\cal P}}
\begin{document}
\title{\vspace{-10ex} ~~ \\
Resolvable  3-star designs}
\author {Selda K\"{u}\c{c}\"{u}k\c{c}\.{i}f\c{c}\.{i}
\thanks{Research supported by Scientific and Technological Research Council of Turkey Grant Number: 110T692}\\
\small \msp Department of Mathematics \\
\small \msp Ko\c{c} University \\
\small \msp Istanbul\\
\small Turkey\\
{\small \vspace{1ex} \tt skucukcifci@ku.edu.tr} \\  
Giovanni Lo Faro
\thanks{Corresponding author}
\thanks{Supported  by PRIN, PRA and I.N.D.A.M (G.N.S.A.G.A.), Italy}\\
\small \msp Dipartimento di Matematica e Informatica \\
\small \msp Universit\`a di Messina \\
\small \msp Messina\\
\small Italia\\
{\small \vspace{1ex} \tt lofaro@unime.it}\\
 Salvatore Milici
\thanks{Supported  by I.N.D.A.M (G.N.S.A.G.A.), Italy}\\
\small \msp Dipartimento di Matematica e Informatica \\
\small \msp Universit\`a di Catania \\
\small \msp Catania\\
\small Italia\\
{\small \vspace{1ex} \tt milici@dmi.unict.it}\\  
Antoinette Tripodi
\thanks{Supported  by PRIN, PRA and I.N.D.A.M (G.N.S.A.G A.), Italy}\\
\small \msp Dipartimento di Matematica e Informatica \\
\small \msp Universit\`a di Messina \\
\small \msp Messina\\
\small Italia\\
{\small \vspace{1ex} \tt atripodi@unime.it}
 }

\date{ }
\maketitle

\begin{abstract}
Let $K_v$ be the complete graph of order $v$ and $F$ be a set of 1-factors of $K_v$. In this article we
study the existence of a resolvable decomposition of $K_v-F$ into 3-stars when $F$ has the minimum number of 1-factors.
We completely solve the case in which $F$ has the minimum number of 1-factors, with the possible exception of
$v\in\{40,44,52,76,92,100,280,284,\\328,332,428,472,476,572\}$.
\end{abstract}

\vbox{\small
\vspace{5 mm}
\noindent \textbf{AMS Subject classification:} $05B05$.\\
\textbf{Keywords:} Resolvable graph decomposition; uniform resolutions; 3-stars designs.
 }

\section{Introduction
 }\label{introduzione}

Given a collection of graphs $\cH$, an {\em $\cH$-decomposition} of
a graph $G$(also called $\cH$-{\em design}) is a decomposition of the edges of $G$ into isomorphic
copies of graphs from $\cH$; the copies of $H\in\cH$ in the
decomposition are called {\em blocks}. Such a decomposition is
called {\em resolvable} if it is possible to partition the blocks
into {\em classes} $\cP_i$ such that every point of $G$ appears
exactly once in some block of each $\cP_i$.

A resolvable $\cH$-decomposition of $G$ is sometimes also referred
to as a {\em $\cH$-factorization of $G$}; a class can be called a
{\em $\cH$-factor of $G$}. The case where $\cH$ is a single edge
($K_2$) is known as a {\em $1$-factorization of $G$} and it is well
known to exist for $G=K_v$ if and only if $v$ is even. A single
class of a $1$-factorization, a pairing of all points, is also known
as a {\em $1$-factor\/} or a {\em perfect matching}.

In many cases we wish to impose further constraints on
the classes of an $\cH$-decomposition.
 For example, a class is called {\em uniform\/} if every
block of the class is isomorphic to the same graph from $\cH$. Of
particular note is the result of Rees (\cite{R}) which finds necessary
and sufficient conditions for the existence of uniform $\{K_2,
K_3\}$-decompositions of $K_v$. Uniformly resolvable decompositions
of $K_v$ have also been studied in \cite{DQS}, \cite{GM}, \cite{HR},
 \cite{KMT}, \cite{M}, \cite{MT}, \cite{S1}, \cite{S2}, \cite{SG} and \cite{S3}. Moreover, recently in the case $\cH=\{G_1, G_2\}$
the question of the existence of a uniformly resolvable decomposition of $ K_{v}$ into $r$ classes of
$G_1$ and $s$ classes of $G_2$ have been studied  in the case in which the
number\/ $s$ of\/ $G_2$-factors is maximum. Dinitz, Ling and Danziger
(\cite{DLD}) have solved the case $\cH=\{K_2, K_4\}$ and Kucukcifci, Milici and Tuza (\cite{KMT})
the case  $\cH=\{K_3, K_{1,3}\}$.
In what follows, we will denote by $(a_1;a_2,a_3,a_4)$ the {\em $3$-star\/}, $K_{1,3}$ having
vertex set $\{a_1,a_2,a_3,a_4\}$ and edge set $\{\{a_1,a_2\}, \{a_1,a_3\}, \{a_1,a_4\}\}$.
We will use the notation $(K_2, K_{1,3})$-URD$(v;r,s)$ to denote a uniformly resolvable decomposition of $K_{v}$
into $r$ classes containing only copies of $K_2$ (i.e. 1-factors) and $s$ classes containing only copies of 3-stars.

In this paper, the main purpose is to investigate the existence of a  $(K_2, K_{1,3})$-URD$(v;r,s)$ in the case in which $s>0$ and $r$ is minimum. In particular, we will prove the following result:\\

\noindent \textbf{Main Theorem.} {\em For each\/  $v\equiv
0\pmod{4}$, there exists a\/ $(K_2, K_{1,3})$-URD$(v; $ $r(v),\frac{2(v-1-r(v))}{3})$, with $r(v)$ as in the Table 1 and with the possible exception of  $v\in\{40,44,52,76,92,100,280,284,328,332,428,472,476,572\}$}.

\vspace{4 mm}

 \begin{minipage}[t]{\textwidth}
\begin{center}
\begin{tabular}{|c|c|}
\hline
  $v  $ &  {$r(v)$}
\\
\hline
$0 \pmod{12}$ & $5 $\\
$4 \pmod{12}$ & $3$\\
$8 \pmod{12}$ & $1$\\

\hline
 \end{tabular}

\bigskip

Table 1: The set $r(v)$.

 \end{center}\end{minipage}

\vspace{4 mm}

\section{Necessary conditions}

In this section we will give necessary conditions for the existence
of a uniformly resolvable decomposition of $K_v$ into
$r$  1-factors and $s$ classes of
3-stars, $s>0$.

\begin{lemma}
\label{lemmaP0} If there exists a\/ $(K_2, K_{1,3})$-URD$(v;r,s)$, $s>0$,  then\/ $v\equiv 0\pmod{4}$ and\/ $s\equiv 0\pmod{4}$.
\end{lemma}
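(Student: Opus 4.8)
The plan is to derive the two congruences from elementary counting: one from the edge count of $K_v$, and the decisive one from a vertex-degree argument applied to the $K_{1,3}$-factors.

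First I would note that each resolution class is a spanning subgraph of $K_v$ in which every vertex lies in exactly one block. A $1$-factor forces $v$ to be even; more to the point, a $K_{1,3}$-factor partitions the $v$ vertices into blocks of size $4$, so its mere existence forces $v\equiv 0\pmod 4$. Since $s>0$, at least one $K_{1,3}$-factor occurs in the decomposition, and hence $v\equiv 0\pmod 4$.

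Next I would count edges. A $1$-factor of $K_v$ has $v/2$ edges and a $K_{1,3}$-factor has $3v/4$ edges (it consists of $v/4$ stars, each with $3$ edges). Since the $r$ $1$-factors and the $s$ $3$-star classes partition the edge set of $K_v$, we get $r\cdot\frac v2 + s\cdot\frac{3v}4 = \binom v2$, which simplifies to $2r+3s=2(v-1)$; in particular $v-1 = r+\tfrac{3s}{2}$.

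Finally, the key step. Fix a vertex $x$. In each of the $r$ $1$-factors $x$ has degree $1$; in each of the $s$ $3$-star classes $x$ is either the centre of its block (degree $3$) or a leaf (degree $1$). Let $c_x$ denote the number of $3$-star classes in which $x$ is a centre. Summing the degrees of $x$ over all classes and using that $x$ has degree $v-1$ in $K_v$ gives $r + 3c_x + (s-c_x) = v-1$, i.e. $c_x = \frac{v-1-r-s}{2}$. Substituting $v-1 = r+\frac{3s}{2}$ yields $c_x = s/4$. Since $c_x$ is a nonnegative integer, $s\equiv 0\pmod 4$, completing the proof. I do not anticipate any real obstacle here; the only point requiring a little care is the degree bookkeeping — that the classes are edge-disjoint and cover all of $K_v$, so the per-class degrees at $x$ sum to $v-1$.
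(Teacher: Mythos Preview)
Your proof is correct and follows essentially the same approach as the paper: both derive $v\equiv 0\pmod 4$ from resolvability into $4$-element blocks, obtain the relation $2r+3s=2(v-1)$ by edge counting, and then fix a vertex and count how many $3$-star classes have it as centre versus leaf to conclude $s\equiv 0\pmod 4$. Your $c_x=s/4$ is exactly the paper's $a$ (they phrase the conclusion as $b=3a$, equivalently $s=4a$).
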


\begin{proof}
Assume that there exists a $(K_2, K_{1,3})$-URD$(v;r,s)$ $D$, $s>0$. By resolvability it follows that
$v\equiv 0\pmod{4}$. Counting the edges of $K_v$ that
appear in $D$ we obtain

$$\frac{rv}{2}+\frac{3sv}{4}=\frac{v(v-1)}{2}$$
 and hence
\begin{equation}
  2r+3s=2(v-1). \end{equation}

Denote by $B$ the set of $s$ parallel classes of $3$-stars and
by $R$ the set of $r$ parallel classes of $K_2$. Since the
classes of $R$ are regular of degree $1$, we have that every vertex
$x$ of $K_v$ is incident with $r$ edges in $R$ and
$(v-1)-r$ edges in $B$. Assume that the vertex $x$
appears in $a$ classes with degree $3$ and in $b$ classes with degree 1
in $B$. Since

$$a+b=s \ \ \mbox{and}\ \
3a+b=v-1-r,$$ 
the equality (1) implies that
$$2(v-1-3a-b)+3(a+b)=2(v-1)$$
and hence $$b=3a.$$
This completes the proof.
\end{proof}

\begin{lemma}
\label{lemmaP1} A\/ $(K_2, K_{1,3})$-URD$(v;0,s)$   does not exist for any $v\geq4$.
\end{lemma}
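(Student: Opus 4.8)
The plan is to show that if $r = 0$ then the edge-counting identity $(1)$ forces $3s = 2(v-1)$, and then to derive a contradiction from the structural constraint obtained in the proof of Lemma~\ref{lemmaP0}. Concretely, with $r=0$ equation $(1)$ becomes $3s = 2(v-1)$, so in particular $3 \mid 2(v-1)$, i.e. $v \equiv 1 \pmod 3$; combined with $v \equiv 0 \pmod 4$ from Lemma~\ref{lemmaP0} this gives $v \equiv 4 \pmod{12}$, and $s = \frac{2(v-1)}{3}$.

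Next I would invoke the degree bookkeeping from the proof of Lemma~\ref{lemmaP0}: fix a vertex $x$; since $r=0$, every one of the $v-1$ edges at $x$ lies in a $3$-star class, and in each such class $x$ has degree either $3$ (when $x$ is the centre of its star) or $1$ (when $x$ is a leaf). Writing $a$ for the number of classes in which $x$ is a centre and $b$ for the number in which $x$ is a leaf, we have $a + b = s$ and $3a + b = v-1$. Subtracting, $2a = v - 1 - s = v - 1 - \frac{2(v-1)}{3} = \frac{v-1}{3}$, so $a = \frac{v-1}{6}$. Hence a necessary condition is $v \equiv 1 \pmod 6$, which together with $v \equiv 0 \pmod 4$ yields $v \equiv 4 \pmod{12}$ again — consistent so far, so this alone is not yet the contradiction.

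The contradiction should instead come from a global count of the star centres. Each $3$-star class partitions the $v$ vertices into $\frac{v}{4}$ stars, each contributing exactly one centre, so the total number of (class, centre-vertex) incidences over all $s$ classes is $s \cdot \frac{v}{4}$. On the other hand, summing the per-vertex count $a = \frac{v-1}{6}$ over all $v$ vertices gives $v \cdot \frac{v-1}{6}$. Equating, $s \cdot \frac{v}{4} = \frac{v(v-1)}{6}$, so $s = \frac{2(v-1)}{3}$, which matches what we already had — so I would look more carefully: the real obstruction is that $a = \frac{v-1}{6}$ must be an integer \emph{and} we independently need $\frac{v}{4}\cdot s$ divisible appropriately, but the genuinely new restriction is modular. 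Let me reconsider: from $s = \frac{2(v-1)}{3}$ and $v \equiv 0 \pmod 4$, write $v = 4m$; then $s = \frac{2(4m-1)}{3}$ requires $4m \equiv 1 \pmod 3$, i.e. $m \equiv 1 \pmod 3$, so $v = 4m$ with $m \equiv 1 \pmod 3$, i.e. $v \equiv 4 \pmod{12}$. The resolvability of the star classes also requires $4 \mid v$, already assumed.

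The actual kill, which I expect to be the crux, is a small-cases or parity argument that rules out \emph{every} such $v$. I would argue as follows. In a $(K_2,K_{1,3})$-URD$(v;0,s)$ the $s$ star-classes use up all $\binom{v}{2}$ edges, and each class has $\frac v4$ stars, so $s\cdot\frac v4\cdot 3=\binom v2$, recovering $s=\frac{2(v-1)}3$. Now consider any two distinct vertices $x,y$: the edge $xy$ lies in exactly one class, in which one of $x,y$ is the centre. Counting over all vertices $y\ne x$, vertex $x$ is the centre of the star through $xy$ for exactly $3a = 3\cdot\frac{v-1}{6} = \frac{v-1}{2}$ choices of $y$ — but this must be an integer, forcing $v$ odd, contradicting $v \equiv 0 \pmod 4$. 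This is the contradiction: $3a$ counts the leaves attached to $x$ across all classes, which equals $b' := $ the number of vertices $y$ with $xy$ in a star centred at $x$, and simultaneously $3a = \frac{v-1}{2} \notin \mathbb{Z}$. Hence no such design exists for any $v \ge 4$.

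\begin{proof}
Suppose, for contradiction, that a $(K_2, K_{1,3})$-URD$(v;0,s)$ exists for some $v \ge 4$. Since $s > 0$ is forced by $r = 0$ and $v \ge 4$ (there must be classes), Lemma~\ref{lemmaP0} gives $v \equiv 0 \pmod 4$, and equation $(1)$ with $r = 0$ becomes $3s = 2(v-1)$. Fix a vertex $x$. As in the proof of Lemma~\ref{lemmaP0}, with $r = 0$ every edge at $x$ lies in a $3$-star class; letting $a$ be the number of classes in which $x$ is a centre, we have $a + b = s$ and $3a + b = v - 1$, whence $2a = v - 1 - s$. Using $s = \frac{2(v-1)}{3}$ we get $2a = \frac{v-1}{3}$, so $6a = v - 1$. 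Now the number of vertices $y \ne x$ such that the edge $xy$ lies in a star centred at $x$ equals the total leaf-degree of $x$ over the classes where it is a centre, namely $3a = \frac{v-1}{2}$. Since $v \equiv 0 \pmod 4$, $v - 1$ is odd and $\frac{v-1}{2}$ is not an integer, a contradiction. Therefore no $(K_2, K_{1,3})$-URD$(v;0,s)$ exists for any $v \ge 4$.
\end{proof}
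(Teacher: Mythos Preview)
Your final proof is correct and is essentially the same argument as the paper's: both reduce to the parity contradiction that $v-1$ would have to be a multiple of $6$ (the paper via $s\equiv 0\pmod 4$ from Lemma~\ref{lemmaP0} giving $2(v-1)=12t$, you via the local count giving $6a=v-1$), while $v\equiv 0\pmod 4$ forces $v-1$ odd. The exploratory discussion preceding your proof is longer than needed---once you reach $6a=v-1$ the contradiction is immediate without the additional interpretation of $3a$---but the written proof itself is sound.
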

\begin{proof}

Suppose that there exists a uniformly resolvable decomposition $D$
of $K_{v}$ into $s$ classes containing only copies of 3-stars with $s>0$.
Counting the edges of $K_v$ that appear in $D$ we obtain \\
$$s=\frac{2(v-1)}{3}.$$
Since, by Lemma \ref{lemmaP0}, $s=4t$ it follows $$2(v-1)=12t,$$
which is a contradiction, because $v-1$ cannot be even for any $v\geq4$.
\end{proof}

Given $v\equiv 0\pmod{4}$, define $J(v)$ according to the following table:

\vspace{4 mm}

 \begin{minipage}[t]{\textwidth}
\begin{center}
\begin{tabular}{|c|c|}
\hline
  $v  $ &  {$J(v)$}
\\
\hline
$0 \pmod{12}$ & $\{(v-1-6x, 4x), x=0,1,\ldots,\frac{v-6}{6}\} $\\
$4 \pmod{12}$ & $\{(v-1-6x, 4x), x=0,1,\ldots,\frac{v-4}{6}\}$\\
$8 \pmod{12}$ & $\{(v-1-6x, 4x), x=0,1,\ldots,\frac{v-2}{6}\}$\\

\hline
 \end{tabular}

\bigskip

Table 2: The set $J(v)$.

 \end{center}\end{minipage}

\vspace{4 mm}

\begin{lemma}
\label{lemmaP2} If there exists a\/ $(K_2, K_{1,3})$-URD$(v;r,s)$  then\/  $(r,s)\in J(v)$.
\end{lemma}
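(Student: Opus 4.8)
The plan is to combine the two counting identities already established in Lemma~\ref{lemmaP0} with the obvious range restrictions on the parameter that measures how many $3$-star classes a vertex sees. From the proof of Lemma~\ref{lemmaP0} we have $2r+3s=2(v-1)$ together with $b=3a$, where $a$ is the number of $3$-regular classes at a fixed vertex $x$ inside $B$ and $b$ the number of degree-$1$ classes; hence $s=a+b=4a$, so writing $x:=a$ we get $s=4x$ and $r=v-1-6x$. This already shows $(r,s)$ has the stated form $(v-1-6x,4x)$; what remains is to pin down the exact range of $x$, i.e. to find for each residue class of $v$ modulo $12$ the largest admissible value of $x$.

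First I would record the lower bound $x\ge 0$, which is immediate, and the constraint $r\ge 0$, giving $x\le (v-1)/6$; since $v\equiv 0\pmod 4$, this integer bound depends on $v\bmod 12$. For $v\equiv 0\pmod{12}$, $(v-1)/6$ is not an integer and $\lfloor (v-1)/6\rfloor=(v-6)/6$; for $v\equiv 4\pmod{12}$ it is $(v-4)/6$; for $v\equiv 8\pmod{12}$ it is $(v-2)/6$. These are exactly the upper limits in Table~2, so the only thing left is to rule out the endpoint case $r=0$ when it would otherwise be an integer, and to check that no further obstruction removes interior values. By Lemma~\ref{lemmaP1}, a $(K_2,K_{1,3})$-URD$(v;0,s)$ does not exist, so whenever $(v-1)/6$ happens to be an integer (which never occurs, as $v-1$ is odd, so $r=0$ forces $s=2(v-1)/3$ with $v-1$ even, impossible) the endpoint is already excluded; in fact the cleaner observation is simply that $r=v-1-6x$ is odd exactly when $v$ is even, so $r\ge 1$ automatically, and one should double-check that this does not further restrict $x$ beyond the divisibility $4\mid s$ already used.

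So the core of the argument is: (i) invoke the identities $2r+3s=2(v-1)$ and $b=3a$ from Lemma~\ref{lemmaP0} to force $s=4x$, $r=v-1-6x$ for some nonnegative integer $x$; (ii) use $r\ge 0$ (equivalently $r\ge 1$, since $r$ is odd) together with the value of $\lfloor (v-1)/6\rfloor$ in each residue class mod $12$ to obtain the upper bound on $x$ listed in Table~2. The main obstacle, such as it is, will be the bookkeeping of the floor function across the three residue classes modulo $12$ and making sure the endpoint $x=\lfloor(v-1)/6\rfloor$ is genuinely attainable from the necessary-conditions standpoint (it is, since it yields $r\ge 1$ and $s\equiv 0\pmod 4$, the two constraints we have); no deeper idea is needed, since this lemma only collects necessary conditions and existence is deferred to the constructions in the later sections.
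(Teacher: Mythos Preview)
Your argument is correct and essentially the same as the paper's: from Lemma~\ref{lemmaP0} you extract $s=4x$ and then Equation~(1) forces $r=v-1-6x$, with the range on $x$ coming from $r\ge 0$ (your floor computations in each residue class modulo $12$ are exactly what the paper leaves implicit). The detour through Lemma~\ref{lemmaP1} and the parity remark on $r$ are unnecessary here---as you yourself note, $(v-1)/6$ is never an integer when $v$ is even---but they do no harm.
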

\begin{proof}
Let $D$ be a $(K_2, K_{1,3})$-URD($v;r,s)$. Lemma \ref{lemmaP0} gives $s\equiv 0\pmod{4}$,
Equation (1) $r\equiv (v-1)\pmod{3}$ \ and \
so
\begin{itemize}
   \item
if $v\equiv 0\pmod{12}$, then  $r \equiv2\pmod{3}$,
  \item
if $v\equiv 4\pmod{12}$,  then$r \equiv0\pmod{3}$,
 \item
if   $v\equiv 8\pmod{12}$, then $r \equiv1\pmod{3}$.
\end{itemize}
Letting $s=4x$ in the Equation (1), we have $r=(v-1)-6x$;
since $r$ and $s$ cannot be negative, and $x$ is an integer, the value of
$x$ has to be in the range as given in the definition of $J(v)$.
\end{proof}

\section{Costructions and related structures}

In this section we will introduce some useful definitions, results and discuss
constructions we will use in proving the main result. For missing
terms or results that are not explicitly explained in the paper,
the reader is referred to \cite{CD} and its online updates.
For some results below, we also cite this handbook instead of the
 original papers.
A (resolvable) $\cH$-decomposition of the complete multipartite
graph with $u$ parts each of size $g$  is known as a (resolvable)
group divisible design $\cH$-(R)GDD of type $g^u$, the parts of size
$g$ are called the groups of the design. When $\cH = K_n$ we will
call it an $n$-(R)GDD.

A $(K_2, K_{1,3})$-URGDD $(r,s)$ of type $g^u$ is a uniformly resolvable
decomposition of the complete multipartite graph with $u$ parts
each of size $g$ into $r$ 1-factors
and $s$ classes containing only copies of 3-stars.

If the blocks of an $\cH$-GDD of type ${g^u}$ can be partitioned into partial
parallel classes, each of which contain all points except those of one group,
we refer to the decomposition as a {\em frame}. When $\cH = K_n$ we will
call it an $n$-{\em frame} and it is easy to deduce that the number of partial parallel classes missing a specified group $G$ is
$\frac{|G|}{n-1}$.

An incomplete resolvable $(K_{2}, K_{1,3})$-decomposition of $K_{v+h}$ with a hole of size $h$ is a
$(K_{2}, K_{1,3})$-decomposition of $K_{v+h}- K_h$ in which there are two types of
classes, {\em partial} classes which cover every point except
those in the hole (the points of $K_h$ are referred to as the {\em hole}) and  {\em full} classes which cover every point of $K_{v+h}$.
Specifically a $(K_{2}, K_{1,3})$-IURD$(v+h, h; [r_1, s_1], [\bar{r}_1, \bar{s}_1])$ is a uniformly
resolvable $(K_{2}, K_{1,3})-$decomposition of $K_{v+h}-K_h$ with $r_1$ 1-factors and $s_1$ classes of 3-stars which cover only the points not in the hole, $\bar{r}_1$ 1-factors and  $\bar{s}_1$ classes of 3-stars which cover every point of $K_{v+h}$.

We now recall the existence of
some $4$-RGDDs and 4-frames we will need in the proof.

\begin{lemma}
\label{lemma R1} {\rm (\cite{CD}, \cite{S3})} There exists a\/ $4$-RGDD of type
\begin{itemize}
\item $4^{t}$  for each\/ $t\equiv 1\pmod{3}$, $t \geq 4$;
\item $3^{t}$  for each\/ $t\equiv 0\pmod{4}$, $t \geq 4$;
\item $2^{22}$, $2^{106}$ and $2^{142}$.

\end{itemize}
\end{lemma}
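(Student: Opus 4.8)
The plan is to cite the existing literature, since all of these $4$-RGDDs are known; the role of this lemma is purely to collect the specific ingredients needed later. First I would recall the classical result on Kirkman-type systems: a $4$-RGDD of type $g^u$ exists if and only if $gu\equiv 0\pmod 4$, $g(u-1)\equiv 0\pmod 3$, $u\geq 4$, and $(g,u)\neq(2,4),(6,4)$ (with a short, historically delicate list of genuine exceptions that has been completely resolved). The three sub-cases in the statement are then read off from this: for type $4^t$ one needs $t\equiv 1\pmod 3$ and $t\geq 4$; for type $3^t$ one needs $t\equiv 0\pmod 4$ and $t\geq 4$; and the sporadic types $2^{22}$, $2^{106}$, $2^{142}$ fall under $g=2$, where the divisibility conditions force $u\equiv 1\pmod 3$ and $u$ even, and these three particular orders are among the values for which a construction is recorded in \cite{S3} and tabulated in \cite{CD}.

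The key steps, in order, are: (i) state the necessary conditions $gu\equiv 0\pmod 4$ and $g(u-1)\equiv 0\pmod 3$ and check that each listed type satisfies them; (ii) for $4^t$, invoke the Ray-Chaudhuri--Wilson / Hanani-type theorem giving resolvable designs with block size $4$ (equivalently, Kirkman frames of type $4^t$), noting $(4,t)$ avoids the exceptional pair $(2,4)$ automatically; (iii) for $3^t$ with $t\equiv 0\pmod 4$, cite the corresponding existence result for $4$-RGDDs of type $3^t$; (iv) for the three $2^u$ types, cite the specific constructions. Since every item is quoted verbatim from \cite{CD} and \cite{S3}, no computation is required beyond the congruence checks in step (i).

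The only real ``obstacle'' here is one of bookkeeping rather than mathematics: one must make sure the handbook \cite{CD} and \cite{S3} actually list these exact types as existing (in particular that $2^{22}$, $2^{106}$, $2^{142}$ are not among the known exceptions for $g=2$, where a few small orders genuinely fail), and that the lower bounds $t\geq 4$ are tight enough to exclude the degenerate and exceptional small cases. Once that verification is done, the lemma follows immediately by citation.

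\begin{proof}
All of the designs listed are known; we refer to \cite{CD} and its online updates, and to \cite{S3}, for the constructions.

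A $4$-RGDD of type $g^u$ requires $gu\equiv 0\pmod 4$ and $g(u-1)\equiv 0\pmod 3$, together with $u\geq 4$. For type $4^t$ these conditions reduce to $t\equiv 1\pmod 3$, and for every such $t\geq 4$ a $4$-RGDD of type $4^t$ exists. For type $3^t$ the conditions reduce to $t\equiv 0\pmod 4$, and for every such $t\geq 4$ a $4$-RGDD of type $3^t$ exists. Finally, for $g=2$ the divisibility conditions force $u\equiv 1\pmod 3$ with $u$ even; the values $u=22,106,142$ satisfy these congruences, and $4$-RGDDs of types $2^{22}$, $2^{106}$ and $2^{142}$ are recorded in \cite{S3} (see also \cite{CD}).
\end{proof}
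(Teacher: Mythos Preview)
Your proposal is correct and matches the paper's approach: the paper states this lemma purely as a quotation of known results from \cite{CD} and \cite{S3} and gives no proof at all, so your citation-plus-congruence-check is, if anything, slightly more detailed than what the paper does.
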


\begin{lemma}
\label{lemma RF} {\rm (\cite{S3})} There exists a\/ $4$-frame of type
 $6^{t}$  for each\/ $t\equiv 1\pmod{2}$, $t \geq 5$ with the possible exception of $t\in\{7,23,27,35,39,47\}$.
\end{lemma}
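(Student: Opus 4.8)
\textbf{Proof proposal for Lemma \ref{lemma RF} (existence of $4$-frames of type $6^t$).}

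The plan is to establish the existence of a $(K_2,K_{1,3})$-URGDD and the $4$-frame statement by the standard ``small cases plus recursive constructions'' strategy familiar from resolvable design theory. First I would treat the necessary-condition side: a $4$-frame of type $6^t$ has $6t$ points, each group contributes $\frac{6}{4-1}=2$ partial parallel classes missing that group, and a parallel-class count on the multipartite graph forces $6t\equiv 0\pmod 4$ to fail but $6(t-1)\equiv 0 \pmod 4$ must hold for the partial classes to exist, which after arithmetic pins down $t$ odd; combined with a point-count lower bound this yields $t\geq 5$. So the content is the sufficiency for odd $t\geq 5$ outside the stated exceptional set.

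Next I would assemble the ingredient designs. The base objects are $4$-frames of small odd type --- type $6^5$, $6^9$, $6^{11}$, $6^{13}$, etc. --- which are either quoted from the literature (the handbook \cite{CD} and Schuster \cite{S3}) or built directly from a $4$-RGDD of type $4^t$ (Lemma \ref{lemma R1}) by a group-filling argument: replace each point by appropriately structured blocks, or adjoin parallel classes so that each class ``misses'' exactly one group. Then the recursion is the usual \emph{Wilson-type frame construction}: given a $4$-frame of type $m^u$ and, for the needed parameters, a $4$-frame of type $6^m$ (or a suitable GDD to ``blow up'' each point into $6$ points), one produces a $4$-frame of type $(6m)^u$, and more flexibly of type $6^{t}$ for $t$ in an arithmetic progression determined by $u$ and $m$. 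Inflating by weights and filling in groups with the small frames handles all sufficiently large $t$; the residual small and medium values of $t$ are cleared one at a time using direct constructions or ad hoc GDD manipulations.

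The main obstacle --- and the reason for the exceptional list $t\in\{7,23,27,35,39,47\}$ --- is precisely the mismatch between which small frames are available as ``seeds'' and which residues modulo the inflation factors the recursion can reach. For $t$ just above the base cases (notably $t=7$) there is no room to inflate a smaller frame, and a direct construction must be found or the case left open; the medium values $23,27,35,39,47$ are the ones that cannot be written as a product $m\cdot u$ with both a $4$-frame of type $m^u$ and a $4$-frame of type $6^m$ already in hand, nor reached by adding parallel classes to a $4$-RGDD of an admissible type. I would therefore spend most of the effort verifying that every odd $t\geq 5$ \emph{not} in that set admits a factorization or a group-filling decomposition supported by Lemma \ref{lemma R1} and the documented small frames, and simply record the six recalcitrant values as possible exceptions, exactly as stated. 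Since this lemma is quoted from \cite{S3}, the ``proof'' in the paper will in fact be a citation; the above is the shape of the argument underlying that citation.
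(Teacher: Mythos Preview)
Your final sentence is exactly right: in the paper this lemma carries no proof at all --- it is simply quoted from \cite{S3}, so there is nothing to compare your sketch against. Your outline of the underlying argument (necessary conditions forcing $t$ odd via $6(t-1)\equiv 0\pmod 4$, then small seeds plus Wilson-type inflation, with the listed $t$ as the residual gaps) is a plausible reconstruction of how such results are typically obtained, but the present paper neither supplies nor needs any of that machinery; it treats the lemma purely as an imported black box.
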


We also need the following definitions. Let $(s_1, t_1)$ and $(s_2, t_2)$
be two pairs of non-negative integers. Define $(s_1, t_1) +(s_2,
t_2)=(s_1+s_2, t_1+t_2)$. If $X$ and $Y$ are two sets of pairs of
non-negative integers, then $X+Y$ denotes the set $\{(s_1, t_1)
+(s_2, t_2) : (s_1, t_1)\in X, (s_2, t_2) \in Y \}$. If $X$ is a set
of pairs of non-negative integers and $h$ is a positive integer,
then  $h * X$ denotes the set of all pairs of non-negative integers
which can be obtained by adding any $h$ elements of $X$ together
(repetitions of elements of $X$ are allowed).

\begin{thm}
\label{thmR2} Let\/ $v$, $g$, $t$  and\/ $u$   be non-negative
integers such that\/ $v=gtu$. If there exists
\begin{itemize}
\item
[$(1)$] a\/ $4$-RGDD of type\/ $g^{u}$;

\item
[$(2)$] a $(K_2, K_{1,3})$-URGDD$(r_1,s_1)$ of\/
type $t^{4}$ with\/ $(r_1, s_1)\in J_1$;

\item
[$(3)$] a $(K_2, K_{1,3})$-URD$(gt;r_2,s_2)$,\/ with\/ $(r_2, s_2)\in J_2$;
\end{itemize}
then there exists a\/ $(K_2, K_{1,3})$-URD$(v;r,s)$  for
each\/ $(r,s)\in J_2+ h\ast J_1$, where\/ $h=\frac{g(u-1)}{3}$ is the
number of parallel classes of the\/ $4$-RGDD of type\/ $g^{u}$.
\end{thm}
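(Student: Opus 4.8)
The plan is to build the $(K_2, K_{1,3})$-URD$(v;r,s)$ on the vertex set $I_g \times I_t \times I_u$ (or, more conveniently, on $I_{gu}\times I_t$, matching the group structure of the $4$-RGDD) by a standard "inflation and fill-in" argument. First I would take the $4$-RGDD of type $g^u$ from hypothesis $(1)$ and blow up each of its points into a copy of $I_t$; each block $\{a,b,c,d\}$ of the RGDD thereby becomes a copy of the complete multipartite graph $K_{t,t,t,t}$ on the $4t$ inflated points. Onto each such blown-up block I impose a $(K_2, K_{1,3})$-URGDD$(r_1,s_1)$ of type $t^4$ coming from hypothesis $(2)$, choosing for each block independently any admissible pair $(r_1,s_1)\in J_1$. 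Since the $4$-RGDD has exactly $h=\frac{g(u-1)}{3}$ parallel classes, and within one parallel class the blocks partition all $v$ vertices, the classes of the URGDDs attached to the blocks of a single RGDD-parallel-class can be aligned in parallel to yield genuine full classes on all $v$ points: from the $i$-th RGDD-parallel-class we get, after summing the contributions block by block, a collection of $1$-factors and $3$-star classes whose parameter pair lies in $h\ast J_1$ (here the operation $h\ast J_1$ and the sumset notation are exactly what encode "pick one pair from $J_1$ for each of the $h$ parallel classes and add them up").

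Next I would deal with the edges that have not yet been covered, namely the edges lying inside the $u$ "columns" $\{x\}\times I_t\times I_u$ — wait, more precisely the edges inside each of the $u$ inflated groups $G_j\times I_t$, which together with... let me restate: the uncovered edges are exactly those joining two inflated points that sat in the same group of the $4$-RGDD. For each group $G_j$ of size $g$, the inflated group carries a copy of $K_{gt}$, and I fill it using a $(K_2, K_{1,3})$-URD$(gt;r_2,s_2)$ from hypothesis $(3)$, with $(r_2,s_2)\in J_2$. These $u$ copies of $K_{gt}$ are vertex-disjoint and together exhaust all remaining vertices in each of their classes, so their classes again combine in parallel across the $u$ groups into full classes of $K_v$, contributing exactly the pair $(r_2,s_2)\in J_2$. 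Adding the contribution $h\ast J_1$ from the inflated blocks to the contribution $J_2$ from the inflated groups gives a uniformly resolvable decomposition of $K_v$ with parameters in $J_2+h\ast J_1$, as claimed; and conversely every pair in $J_2+h\ast J_1$ is realized by making the appropriate choices. One should also check the trivial arithmetic that $\frac{g(u-1)}{3}$ is indeed the number of parallel classes of a $4$-RGDD of type $g^u$ (each class covers $gu$ points in blocks of size $4$, there are $\frac{gu}{4}$ blocks per class, and a standard edge count gives $\frac{g(u-1)}{3}$ classes), which forces $g(u-1)\equiv 0\pmod 3$ whenever such an RGDD exists, so $h$ is a genuine non-negative integer.

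The main point requiring care — the one genuine obstacle rather than bookkeeping — is the synchronization of resolutions. For the $3$-star classes coming from a blown-up block one must be sure that the resolution of the type-$t^4$ URGDD really does partition all $4t$ inflated points in each of its classes (true by definition of URGDD), so that simultaneously over the $\frac{gu}{4}$ disjoint blocks of one RGDD-parallel-class these classes stitch together into a partition of all $v$ vertices; the delicate bit is that different blocks may use URGDDs with different parameter pairs $(r_1,s_1)$, so within one RGDD-parallel-class the blocks need not all contribute the same number of $1$-factors versus $3$-star classes — but one is free to pad with $1$-factors or to reorder, and it is exactly the flexibility built into the set operation $h\ast J_1$ (adding any $h$ chosen elements of $J_1$, repetitions allowed) that absorbs this, provided one verifies that whenever two blocks in the same parallel class contribute different splits they can still be amalgamated into valid full classes. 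I would handle this by first fixing, for each of the $h$ parallel classes, a single target pair from $J_1$ to be used by every block in that class (this suffices to realize all of $h\ast J_1$, since an element of $h\ast J_1$ is by definition a sum of $h$ elements of $J_1$, one per parallel class), which makes the amalgamation automatic. The remaining steps — that the filled-in copies of $K_{gt}$ cover precisely the within-group edges, that the whole thing is a decomposition of $K_v$ (no edge covered twice, every edge covered), and that the total class count is $r+s = r_2+s_2 + h\cdot(\text{split per class})$ with $2r+3s = 2(v-1)$ — are then routine verifications.

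Finally, for completeness I would remark that the hypotheses are mutually consistent: the necessary conditions of Lemmas~\ref{lemmaP0}--\ref{lemmaP2} applied to the type-$t^4$ URGDD and to the $(K_2,K_{1,3})$-URD$(gt;r_2,s_2)$ guarantee the relevant divisibilities, and one checks that $J_2 + h\ast J_1 \subseteq J(v)$ so the output parameters are legitimate. This construction is exactly the workhorse used repeatedly in the sequel to propagate known small URD's and URGDD's up to larger orders, so it is worth stating it in the clean sumset form above.
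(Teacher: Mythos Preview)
Your proposal is correct and follows exactly the paper's approach: inflate the $4$-RGDD of type $g^u$ by a factor of $t$, place a $(K_2,K_{1,3})$-URGDD of type $t^4$ on each inflated block (using the same pair $(r_1,s_1)\in J_1$ across all blocks of a given resolution class), and fill each inflated group with a $(K_2,K_{1,3})$-URD$(gt;r_2,s_2)$. Your discussion of the synchronization issue---fixing one pair from $J_1$ per parallel class so that the block-level resolutions amalgamate into full classes---is in fact more explicit than the paper's own proof, which leaves this point implicit.
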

\begin{proof}
Let $\cG$ be a $4$-RGDD of type $g^{u}$,
with $u$ groups $G_i$, $i=1,2,\ldots ,u$, of size $g$; let $R_1,R_2,\ldots,R_{h}$, $h=\frac{g(u-1)}{3}$, be the
parallel classes of this $4$-RGDD. Expand each point $t$ times and for each block $b$ of a given resolution class
of $\cG$ place on $b\times\{1,2,\ldots,t\}$ a copy of a $(K_2, K_{1,3})$-URGDD$(r_1,s_1)$  of\/
type $t^{4}$ with $(r_1, s_1)\in J_1$. For each
$i=1,2,\ldots,u$, place on $G_i\times\{1,2,\ldots,t\}$ a copy of a
$(K_2, K_{1,3})$-URD$(gt;r_2,s_2)$
with\/ $(r_2, s_2)\in J_2$. The result is a $(K_2, K_{1,3})$-URD$(v;r,s)$ with $(r,s)\in \{J_2+ h\ast
J_1$\}.
\end{proof}

\begin{thm}
\label{thmR3} Let $v$, $g$, $t$, $h$ and $u$  be non-negative integers
such that $v=gtu+h$. If there exists
\begin{itemize}
\item
[$(1)$] a $4$-frame $\cF$ of type $g^{u}$;

\item
[$(2)$] a $(K_{2}, K_{1,3})$-URD$(h;r_1,s_1)$  with $(r_1,
s_1)\in J_1$;

\item
[$(3)$] a $(K_{2}, K_{1,3})$-URGDD$(r_2,s_2)$ of type $t^{4}$ with $(r_2,
s_2)\in J_2$;

\item
[$(4)$] a $(K_{2}, K_{1,3})$-IURD$(gt+h,h; [r_1, s_1], [r_3,s_3])$  with
$(r_1, s_1)\in J_1 $ and $(r_3,s_3)\in J_3$= $ \frac{g}{3}\ast J_2$;
\end{itemize}
then exists a $(K_{2}, K_{1,3})$-URD$(v+h;r,s)$  for each
$(r,s)\in J_1+ u\ast J_3$.
\end{thm}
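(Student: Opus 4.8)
\textbf{Proof proposal for Theorem \ref{thmR3}.}

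The plan is to mimic the construction used in Theorem \ref{thmR2}, but now building on a $4$-frame instead of a $4$-RGDD, so that the ``hole'' of size $h$ can be filled using an incomplete resolvable design. First I would take the $4$-frame $\cF$ of type $g^{u}$ on point set $V$ with groups $G_1,\ldots,G_u$, expand every point $t$ times, and adjoin a set $H$ of $h$ new points disjoint from $V\times\{1,\ldots,t\}$. For each block $b$ appearing in the partial parallel classes of $\cF$, place on $b\times\{1,\ldots,t\}$ a copy of a $(K_2,K_{1,3})$-URGDD$(r_2,s_2)$ of type $t^4$ with $(r_2,s_2)\in J_2$; since $\cF$ has, for each group $G_i$, exactly $\frac{g}{3}$ partial parallel classes missing $G_i$, combining the resolution of each such URGDD across the blocks of one partial parallel class of $\cF$ yields, for each $i$, a family of partial classes on $(V\setminus G_i)\times\{1,\ldots,t\}$ realizing every pair in $\frac{g}{3}\ast J_2=J_3$.

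Next I would fill the holes. For each $i=1,\ldots,u$, the point set $(G_i\times\{1,\ldots,t\})\cup H$ has size $gt+h$, and on it I place a copy of a $(K_2,K_{1,3})$-IURD$(gt+h,h;[r_1,s_1],[r_3,s_3])$ with $(r_1,s_1)\in J_1$ and $(r_3,s_3)\in J_3$; its $r_3$ 1-factors and $s_3$ star-classes are ``full'' classes (covering $G_i\times\{1,\ldots,t\}$ together with $H$), while its $r_1$ 1-factors and $s_1$ star-classes are ``partial'' classes covering only $G_i\times\{1,\ldots,t\}$. The key combinatorial bookkeeping is now that, for a fixed index $i$, the partial classes of the $i$-th IURD (covering $G_i\times\{1,\ldots,t\}$) can be merged with the partial classes arising from the $\frac{g}{3}$ partial parallel classes of $\cF$ that miss $G_i$ (which cover everything \emph{except} $G_i\times\{1,\ldots,t\}$): together these form full parallel classes of the whole point set $(V\times\{1,\ldots,t\})\cup H$. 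Since the partial classes of the $i$-th IURD realize $(r_1,s_1)\in J_1$ and the ones from $\cF$ realize an arbitrary member of $J_3$, doing this for every $i$ contributes $u$ independent summands from $J_3$; the full classes of each IURD must be dealt with separately.

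To handle the full classes of the IURDs I would use the design of the hole directly: take a $(K_2,K_{1,3})$-URD$(h;r_1,s_1)$ with $(r_1,s_1)\in J_1$ on $H$, and observe that its classes can be extended to full classes on the whole point set by adjoining, for each $i$, the $(r_3,s_3)$ full classes of the $i$-th IURD restricted appropriately — more precisely, the $h$ points of $H$ are covered by the URD$(h;r_1,s_1)$, while the points of $G_i\times\{1,\ldots,t\}$ are covered by the full classes of the corresponding IURD, and these align into genuine parallel classes of $K_{v+h}-$nothing. Summing the contributions: the hole design gives one copy of $J_1$, and the $u$ groups give $u$ copies of $J_3$, so the resulting decomposition is a $(K_2,K_{1,3})$-URD$(v+h;r,s)$ for each $(r,s)\in J_1+u\ast J_3$, as claimed. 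The main obstacle I anticipate is precisely the synchronization of the three kinds of partial classes — those from the frame, the partial classes of the IURDs, and the classes of the hole design — into honest parallel classes; one must check carefully that the ``full'' classes of the $i$-th IURD (which include $H$) are exactly what is needed to complete the hole design's classes over $G_i\times\{1,\ldots,t\}$, and that the count of partial parallel classes of the frame missing $G_i$, namely $\frac{g}{n-1}=\frac{g}{3}$, matches the number of partial classes supplied by the IURD. Once this alignment is verified, the pair-counting is routine.
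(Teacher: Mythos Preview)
Your overall architecture is right (blow up a $4$-frame by $t$, adjoin $H$, place URGDDs on blown-up blocks, place IURDs on $(G_i\times\{1,\ldots,t\})\cup H$, fill $H$), but you have the two kinds of IURD classes paired with the wrong partners, and this is not a detail that can be patched: it breaks the construction.

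Concretely, the blown-up partial parallel classes of $\cF$ that miss $G_i$ cover $(V\setminus G_i)\times\{1,\ldots,t\}$ and \emph{nothing in $H$}. If you merge them, as you propose, with the \emph{partial} classes of the $i$-th IURD (which cover $G_i\times\{1,\ldots,t\}$ and also miss $H$), the union covers only $V\times\{1,\ldots,t\}$: the points of $H$ are absent, so these are not parallel classes of the whole point set. Dually, the \emph{full} classes of the $i$-th IURD already cover $H$; combining them with the classes of the URD$(h;r_1,s_1)$ on $H$ double-covers $H$, so ``restricting appropriately'' cannot be made to work without destroying blocks. There is also a numerical mismatch in your pairing: the partial classes of $D_i$ number $r_1+s_1$, whereas the frame-side classes missing $G_i$ number $r_3+s_3$ with $(r_3,s_3)\in J_3$; in the actual applications (e.g.\ $(r_1,s_1)=(3,0)$ versus $(r_3,s_3)=(0,8)$) these are different, so the classes cannot be put in one-to-one correspondence.

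The correct alignment, which is what the paper does, is the swap of yours: for each $i$, pair the frame classes missing $G_i$ with the \emph{full} classes of $D_i$ (the latter supply the missing $G_i\times\{1,\ldots,t\}$ together with $H$), giving $(r_3,s_3)\in J_3$ full classes per group and hence a contribution in $u\ast J_3$; then take the \emph{partial} classes of all the $D_i$ simultaneously (each covering its own $G_i\times\{1,\ldots,t\}$) together with the classes of the URD$(h;r_1,s_1)$ on $H$ to get $r_1$ 1-factors and $s_1$ star-classes on the whole point set, contributing $J_1$. This is exactly why condition~(4) forces the partial parameters of every IURD to equal the same $(r_1,s_1)$ used in condition~(2). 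With that swap your sketch becomes the paper's proof.
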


\begin{proof}

Let $\cF$ be a $4$-frame of type $g^{u}$ with groups $G_i$, $i=1,2,\ldots,u$; expand each point $t$ times and add a set $H=\{a_1,a_2,\ldots,a_h\}$.
For $j=1,2,\ldots,\frac{g}{3}$,  let $p_{i,j}$ be the $j$-th partial parallel class which miss the group $G_i$;  for each $b\in p_{i,j}$, place on $b\times\{1,2,\ldots,t\}$ a copy $D_{i,j}^b$  of a $(K_{2}, K_{1,3})$-URGDD$(r_2,s_2)$ of type $t^4$,  with $(r_2,s_2)\in J_2$;   place on $G_i\times\{1,2,\ldots,t\}\cup H$ a copy $D_i$ of a $(K_{2}, K_{1,3})$-IURD$(gt+h,h;[r_1, s_1], [r_3,s_3])$   with $H$ as hole, $(r_1,s_1)\in J_1$ and $(r_3,s_3)\in J_3$= $ \frac{g}{3}\ast J_2$. Now  combine all together the  parallel classes of $D_{i,j}^b$, $b\in p_{i,j}$,
 along with the full classes of   $D_i$ so to obtain $r_3$ 1-factors and $s_3$ classes
of 3-stars, $(r_3,s_3)\in J_3$, on $\cup_{i=1}^{u} G_i\times\{1,2,\ldots,t\}\cup H $.
Fill the hole $H$ with a copy $D$ of
$(K_{2}, K_{1,3})$-URD$(h;r_1,s_1)$  with $(r_1,s_1)\in J_1$ and
combine the classes of $D$ with the partial classes of $D_i$  so to obtain
$r_1$ 1-factors and $s_1$ classes of 3-stars on $\cup_{i=1}^{u} G_i\times\{1,2,\ldots,t\}\cup H$.
The result is a $(K_{2}, K_{1,3})$-URD$(v+h;r,s)$  for each $(r,s)\in J_1+ u\ast J_3$.

\end{proof}

\section{Small cases}
\begin{lemma}
\label{lemmaD1} There exists a \/ $(K_2, K_{1,3})$-URGDD$(0,4)$ of type\/ $2^{4}$.
\end{lemma}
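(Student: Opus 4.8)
The plan is to exhibit an explicit decomposition. Write the four groups as $\{1,2\}$, $\{3,4\}$, $\{5,6\}$, $\{7,8\}$, so that $K_{2,2,2,2}$ has $8$ vertices, each of degree $6$, and $24$ edges. A parallel class of $3$-stars on $8$ vertices consists of exactly two vertex-disjoint copies of $K_{1,3}$, hence covers $6$ edges, so the decomposition must have exactly $4$ classes.

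Before searching I would record a structural constraint that pins down the shape of any solution. If a vertex is the centre of a $3$-star in $a$ of the four classes and a leaf in the remaining $4-a$, then its degree satisfies $3a+(4-a)=6$, whence $a=1$. So each of the $8$ vertices is the centre of exactly one $3$-star; the eight $3$-stars split into four pairs (one pair per class); and a class is determined by its pair of centres $\{v,w\}$ together with a partition of the other six vertices into the leaf-sets $L_v,L_w$, subject to $L_v$ (resp. $L_w$) being disjoint from the group of $v$ (resp. $w$) and to every edge of $K_{2,2,2,2}$ being used exactly once.

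Guided by this, I would take the four pairs of centres to be the four groups themselves, and then choose the leaf-sets so that no edge is repeated between classes; one working choice is
$$\{(1;3,5,7),(2;4,6,8)\},\ \{(3;2,6,7),(4;1,5,8)\},\ \{(5;2,3,8),(6;1,4,7)\},\ \{(7;2,4,5),(8;1,3,6)\},$$
each pair being a parallel class of $K_{2,2,2,2}$, with all leaves of a star lying outside the group of its centre.

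Finally I would check, by listing the six edges contributed by each class, that the resulting $24$ edges are precisely the $24$ edges of $K_{2,2,2,2}$, each occurring exactly once; this is the only computation and it is entirely routine. The only point requiring care is the selection of the leaf-sets so that no edge recurs across classes, and the counting constraint above keeps that search very short.
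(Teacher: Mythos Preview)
Your proposal is correct and takes essentially the same approach as the paper: both give an explicit list of four $K_{1,3}$-parallel classes on the multipartite graph with groups of size two, and the verification that the $24$ edges partition $E(K_{2,2,2,2})$ is routine. Your preliminary observation that every vertex must be a centre in exactly one class is a nice guiding remark the paper omits, but the core argument (exhibit and check) is identical, and up to the relabelling $i\mapsto i-1$ your first class even coincides with the paper's.
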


\begin{proof}
Take the groups to be $\{0,1\}, \{2,3\}, \{4,5\}, \{6,7\}$ and the
classes as listed below:\\ $\{(0;2,4,6),(1;3,5,7)\}$, $\{
(2;4,1,6),(3;5,0,7)\}$, $\{(5;2,0,7),(4;1,3,6)\}$,\\
$\{(6;1,3,5),(7;0,4,2)\}$.
\end{proof}

\begin{lemma}
\label{lemmaD2} There exists a \/ $(K_2, K_{1,3})$-URD$(8;1,4)$.
\end{lemma}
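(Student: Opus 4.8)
The plan is to construct a $(K_2, K_{1,3})$-URD$(8;1,4)$ directly by exhibiting an explicit resolution on the vertex set $\mathbb{Z}_8$ (or $\{0,1,\ldots,7\}$). By Lemma~\ref{lemmaP2} we have $(1,4)\in J(8)$ since $8\equiv 8\pmod{12}$, so the parameters are admissible; the task is purely combinatorial construction. We need exactly one parallel class consisting of $K_2$'s (a $1$-factor on $8$ points, i.e.\ four disjoint edges) together with four parallel classes each consisting of $K_{1,3}$'s (each such class uses two disjoint $3$-stars, since $2\cdot 4 = 8$), and the $1+4\cdot 3 = 13$ edges per vertex must collectively realize all $\binom{8}{2}=28$ edges of $K_8$ exactly once: $4$ edges from the $1$-factor plus $4\cdot 6 = 24$ edges from the $3$-star classes gives $28$, as required.

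The natural approach is to start from the $(K_2,K_{1,3})$-URGDD$(0,4)$ of type $2^4$ guaranteed by Lemma~\ref{lemmaD1}: that structure already decomposes $K_8$ minus a $1$-factor (the four groups $\{0,1\},\{2,3\},\{4,5\},\{6,7\}$) into four $3$-star classes. Adjoining the missing $1$-factor $\{\{0,1\},\{2,3\},\{4,5\},\{6,7\}\}$ as the single $K_2$-class yields precisely a $(K_2,K_{1,3})$-URD$(8;1,4)$. So the proof can simply say: take the four $3$-star classes from Lemma~\ref{lemmaD1} and add the parallel class $\{\{0,1\},\{2,3\},\{4,5\},\{6,7\}\}$; one checks that every vertex appears once per class and that all $28$ edges are covered.

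I would present it as a short explicit proof listing all five classes, so that verification is immediate:

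\begin{proof}
Let the vertex set be $\{0,1,\ldots,7\}$. The required $1$-factor is
$$\{\{0,1\},\{2,3\},\{4,5\},\{6,7\}\},$$
and the four classes of $3$-stars are
$$\{(0;2,4,6),(1;3,5,7)\},\ \{(2;4,1,6),(3;5,0,7)\},$$
$$\{(5;2,0,7),(4;1,3,6)\},\ \{(6;1,3,5),(7;0,4,2)\}.$$
Each of the five classes is a parallel class on $\{0,1,\ldots,7\}$, and together the $28$ edges used are exactly the edges of $K_8$; hence this is a $(K_2, K_{1,3})$-URD$(8;1,4)$.
\end{proof}

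The only real obstacle is bookkeeping: one must verify that the four $3$-star classes inherited from Lemma~\ref{lemmaD1} cover each of the $24$ non-group edges exactly once and that, together with the four group edges forming the added $1$-factor, no edge is repeated and none is missed. Since Lemma~\ref{lemmaD1} already certifies the GDD property, this is automatic, and no case analysis beyond a direct edge count is needed; if one preferred not to cite Lemma~\ref{lemmaD1}, the same display can be checked by hand in a few lines.
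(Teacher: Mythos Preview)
Your proof is correct and follows essentially the same approach as the paper: the paper's proof is the single line ``The assertion follows by Lemma~\ref{lemmaD1}'', i.e.\ take the $(K_2,K_{1,3})$-URGDD$(0,4)$ of type $2^4$ and adjoin the groups as the missing $1$-factor, which is exactly what you have spelled out explicitly.
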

\begin{proof}
The assertion follows by Lemma \ref{lemmaD1}.
\end{proof}

\begin{lemma}
\label{lemmaD3} There exists a \/ $(K_2, K_{1,3})$-URD$(12;5,4)$.
\end{lemma}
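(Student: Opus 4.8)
The plan is to give a direct construction on the vertex set $\mathbb{Z}_{12}$, splitting the $66$ edges of $K_{12}$ according to their differences: the $36$ edges of differences $1,2,3$ will be used for the four $3$-star factors, while the $30$ edges of differences $4,5,6$ will be used for the five $1$-factors.

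First I would build the $3$-star factors. Since $\{0,1,2,3\}\oplus\{0,4,8\}=\mathbb{Z}_{12}$, the translates of the base $3$-star $(0;1,2,3)$ by the subgroup $\{0,4,8\}$ form a single spanning factor, namely $\{(j;j+1,j+2,j+3),(j+4;j+5,j+6,j+7),(j+8;j+9,j+10,j+11)\}$ (arithmetic mod $12$); letting $j=0,1,2,3$ produces four parallel classes of $3$-stars, and the edges they use are precisely those of differences $1,2,3$, each exactly once.

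Next I would handle the five $1$-factors, coming from the edges of differences $4,5,6$. The difference-$5$ edges form a single $12$-cycle, which splits into two $1$-factors by taking alternate edges. The difference-$4$ edges form four vertex-disjoint triangles and the difference-$6$ edges a perfect matching, and the key point is that these $18$ edges assemble into two vertex-disjoint triangular prisms $K_3\,\square\,K_2$ (the matching pairs the triangle $\{0,4,8\}$ with $\{2,6,10\}$ and the triangle $\{1,5,9\}$ with $\{3,7,11\}$). Each triangular prism is $3$-edge-colorable, i.e.\ decomposes into three perfect matchings, so the two prisms together furnish three more $1$-factors, for a total of five.

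Finally one checks that each of the nine classes is a genuine parallel class of $K_{12}$ (each spans $\mathbb{Z}_{12}$), that the differences are partitioned as claimed, and that $4\cdot 9+5\cdot 6=66=\binom{12}{2}$; all of this is routine bookkeeping. The only real obstacle is that the difference-$4$ edges form odd cycles and hence admit no $1$-factorization on their own; this is circumvented by fusing them with the difference-$6$ matching into triangular prisms, which are $1$-factorable. One also has to make sure the leaf set $\{1,2,3\}$ of the base star is compatible with the tiling group $\{0,4,8\}$, i.e.\ that $0,1,2,3$ represent all residues modulo $4$.
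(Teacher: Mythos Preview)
Your construction is correct. The four $3$-star classes obtained by developing the base class $\{(0;1,2,3),(4;5,6,7),(8;9,10,11)\}$ under $j\mapsto j+1$ (mod~$4$) do partition the $36$ edges of differences $1,2,3$, and the remaining $30$ edges of differences $4,5,6$ genuinely split as a Hamilton $12$-cycle (difference $5$) giving two $1$-factors, together with two vertex-disjoint triangular prisms (difference-$4$ triangles bridged by the difference-$6$ matching), each of which is $3$-edge-colourable, furnishing the remaining three $1$-factors.

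Your route, however, is different from the paper's. The paper simply exhibits an explicit list of four $3$-star classes and five $1$-factors on $\mathbb{Z}_{12}$, with no cyclic or difference structure (indeed its star classes are not translates of one another). Your argument is more conceptual: it explains \emph{why} such a decomposition exists by splitting the edge set according to differences and then recognising the leftover difference-$4$/$6$ graph as a disjoint union of prisms. This buys transparency and verifiability---the reader can see at a glance that every edge is covered once---whereas the paper's bare listing is shorter to state but requires a tedious edge-by-edge check. Your final remark about the leaf set $\{1,2,3\}$ hitting all residues modulo $4$ is exactly the compatibility condition needed for the $\{0,4,8\}$-orbit of the base star to be a parallel class, and it is worth keeping.
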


\begin{proof}
Let $V(K_{12}$)=$\mathbb{Z}_{12}$, and the classes as listed below:\\
$\{(0;4,5,6)$, $(7;8,9,10)$, $(11;1,2,3)$\}$, $\{ $(1;5,6,7)$, $(4;9,10,11)$, $(8;0,2,3)\}$,\\
$\{(2;4,6,7)$, $(5;8,10,11)$, $(9;0,1,{{3}})$\}$, $\{$(3;4,5,7)$, $(6;8,9,11)$, $(10;0,1,2)\}$,\\
$\{\{0,7\}, \{1,4\}, \{2,5\}, \{3,6\}, \{8,11\}, \{9,10\}\}$,\\ $\{\{0,1\}, \{3,10\}, \{2,9\}, \{4,8\}, \{5,6\}, \{7,11\}\}$,\\
$\{\{0,11\}, \{1,8\}, \{2,3\}, \{4,7\}, \{6,10\}, \{5,9\}\}$,\\ $\{\{0,2\}, \{1,3\}, \{4,5\}, \{6,7\}, \{8,10\}, \{9,11\}\}$,\\
$\{\{0,3\}, \{1,2\}, \{5,7\}, \{4,6\}, \{8,9\}, \{10,11\}\}$.
\end{proof}

\begin{lemma}
\label{lemmaD4} There exists a \/ $(K_2, K_{1,3})$-URGDD$(4,8)$ of type\/
$8^{3}$.
\end{lemma}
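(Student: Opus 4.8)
The plan is to give an explicit construction, in the spirit of Lemmas~\ref{lemmaD1}--\ref{lemmaD3}. Take the vertex set to be $\mathbb{Z}_{8}\times\{1,2,3\}$ with the three groups $G_{j}=\mathbb{Z}_{8}\times\{j\}$, $j=1,2,3$, so that the edge set to be decomposed consists of the $3\cdot 64=192$ pairs joining points in distinct groups. A $(K_{2},K_{1,3})$-URGDD$(4,8)$ of type $8^{3}$ then amounts to exhibiting $4$ perfect matchings $N_{1},\dots,N_{4}$ of $K_{8,8,8}$ together with $8$ parallel classes $P_{1},\dots,P_{8}$, each $P_{i}$ consisting of $6$ vertex-disjoint copies of $K_{1,3}$, such that every one of the $192$ edges occurs in exactly one of the $12$ classes. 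The arithmetic is consistent, $4\cdot 12+8\cdot 18=192$, and one notes as a first sanity check that a perfect matching of $K_{8,8,8}$ necessarily uses exactly $4$ edges between each pair of groups.

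To cut down the search I would look for a construction admitting the order-$4$ automorphism $\rho\colon(x,j)\mapsto(x+2,j)$, so that the $8$ star-classes and the $4$ matchings fall into $\rho$-orbits of length $4$: concretely, one develops a small set of ``base'' $3$-stars over the subgroup $\{0,2,4,6\}\le\mathbb{Z}_{8}$ and completes each class with a few fixed stars that mop up the points left uncovered by the orbit, drawing the edges from suitable ``halves'' of the difference $1$-factorizations $\{(i,1)(i+d,2):i\in\mathbb{Z}_{8}\}$, $\{(i,1)(i+d,3):i\in\mathbb{Z}_{8}\}$, $\{(i,2)(i+d,3):i\in\mathbb{Z}_{8}\}$ of the three bipartite components $K_{8,8}$; the matchings $N_{1},\dots,N_{4}$ are then assembled from the difference classes left over. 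Finally one verifies directly that each $P_{i}$ partitions the $24$ points and that the multiset of all edges used equals $E(K_{8,8,8})$.

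The main obstacle is a genuine imbalance forced by divisibility. Already $K_{4,4}$ admits no resolvable $\{K_{2},K_{1,3}\}$-decomposition with a positive number of star-classes: from $4r+6s=16$ one gets $s$ even and $s\in\{0,2\}$, while a vertex of degree $4$ contributes $1$ to each of the $r$ matchings and an odd number ($1$ or $3$) to each star-class, forcing $r+s$ even, which excludes $(r,s)=(1,2)$. Hence no star-class of $K_{8,8,8}$ can have all of its stars inside a single bipartite component, and since the $4$ matchings absorb only $16$ of the $64$ edges of each $K_{8,8}$, the star-classes must redistribute edges across all three components in an uneven way. Guaranteeing simultaneously that every point is covered exactly once in every one of the $8$ star-classes and that no edge is repeated or omitted is where the care is needed; in practice it is cleanest simply to list the $12$ classes and check these two conditions.
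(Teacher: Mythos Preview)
Your proposal is not a proof: it is a description of how one \emph{might} search for a construction, together with some structural remarks, but it never produces the twelve classes nor verifies any candidate. For a lemma of this type the entire content \emph{is} the explicit list (and its verification); the paper's proof accordingly writes out the three groups $\{a_0,\dots,a_7\}$, $\{b_0,\dots,b_7\}$, $\{c_0,\dots,c_7\}$ and lists eight $K_{1,3}$-classes and four $1$-factors by hand, after which the two conditions you name (each class partitions the $24$ points; the edge multiset equals $E(K_{8,8,8})$) are a direct check. Without any such list, there is nothing to check and the lemma is not established.

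Your intended search strategy is also muddled as written. If the eight star-classes are to fall into $\rho$-orbits of length~$4$ under $\rho\colon(x,j)\mapsto(x+2,j)$, then one takes \emph{two whole base classes} and develops each under $\rho$; there is no ``completing each class with a few fixed stars,'' since a class lying in an orbit of length~$4$ is not $\rho$-invariant. (If instead a class were $\rho$-invariant, its orbit would have length~$1$, and you would need eight base classes, not two.) The $K_{4,4}$ digression is correct but does not feed into the construction: that a single star-class cannot live inside one bipartite component of $K_{8,8,8}$ is already trivial on vertex count ($6\cdot4=24>16$), and the deeper non-existence for $(r,s)=(1,2)$ on $K_{4,4}$ says nothing about how the $192$ edges must be distributed here. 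What is actually needed is the explicit data; supply it and the proof is one paragraph.
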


\begin{proof}
Let $\{a_0,a_1\ldots, a_7\}$, $\{b_0,b_1,\ldots, b_7\}$ and $\{c_0,c_1,\ldots,c_7\}$ be the groups and the classes as listed below:\\
$\{(a_0;b_1,b_2,b_3)$, $(b_0;c_0,c_2,c_6)$, $(c_4;a_1,a_2,a_3)$, $(b_7;c_1,c_5,c_7)$, $(c_3;a_4,a_5,a_6)$,\\$(a_7;b_4,b_5,b_6)\}$,\\
$\{(a_1;b_0,b_2,b_3)$, $(b_1;c_1,c_3,c_7)$, $(c_5;a_0,a_2,a_3)$,
$(b_4;c_2,c_4,c_6)$, $(c_0;a_7,a_5,a_6)$,\\ $(a_4;b_7,b_5,b_6)\}$,\\
$\{(a_2;b_1,b_0,b_3)$, $(b_2;c_0,c_2,c_4)$, $(c_6;a_1,a_0,a_3)$,$(b_5;c_3,c_5,c_7)$, $(c_1;a_4,a_7,a_6)$,\\$(a_5;b_4,b_7,b_6)\}$,\\
$\{(a_3;b_1,b_2,b_0)$, $(b_3;c_1,c_3,c_5)$, $(c_7;a_1,a_2,a_0)$,
$(b_6;c_0,c_4,c_6)$, $(c_2;a_4,a_5,a_7)$, \\$(a_6;b_4,b_5,b_7) \}$,\\
$\{(a_0;b_4,b_5,c_4)$, $(b_7;c_0,{{c_6}},a_1)$, $(c_2;b_6,a_2,a_3)$,
$(b_0;c_3,c_5,a_6)$, $(c_7;a_4,a_7,b_3)$, \\$(a_5;b_1,b_2,c_1)\}$,\\
$\{(a_1;b_5,b_6,c_5)$, $(b_4;c_1,a_2,c_7)$, $(c_3;a_3,a_0,b_7)$,
$(b_1;a_7,c_0,c_6)$, $(c_4;a_4,a_5,b_0)$, \\$(a_6;b_2,b_3,c_2) \}$,\\
$\{(a_2;b_6,b_7,c_6)$, $(b_5;a_3,c_2,c_4)$, $(c_0;a_1,a_0,b_4)$,
$(b_2;a_4,c_1,c_7)$, $(c_5;b_1,a_5,a_6)$,\\$(a_7;b_0,b_3,c_3)\}$,\\
$\{(a_3;b_4,b_7,c_7)$, $(b_6;a_0,c_3,c_5)$, $(c_1;a_1,a_2,b_5)$,
$(b_3;c_2,c_4,a_5)$, $(c_6;b_2,a_6,a_7)$,\\ $(a_4;c_0,b_0,b_1)\}$,\\
$\{\{a_0, b_0\}, \{a_1, b_1\}, \{a_2, b_2\}, \{a_3, b_3\}, \{a_4, c_5\},  \{a_5, c_6\},  \{a_6, c_7\},  \{a_7, c_4\},  \{b_4, c_3\},\\
\{b_5, c_0\},  \{b_6, c_1\},  \{b_7, c_2\}\}$,\\
$\{\{a_0, c_1\}, \{a_1, c_2\}, \{a_2, c_3\}, \{a_3, c_0\}, \{a_4, b_3\},  \{a_5, b_0\},  \{a_6, b_1\},  \{a_7, b_2\},  \{b_4, c_5\},\\
\{b_5, c_6\},  \{b_6, c_7\},  \{b_7, c_4\}\}$,\\
$\{{{\{a_0, c_2\}, \{a_1, c_3\}, \{a_2, c_0\}, \{a_3, c_1\}}}, \{a_4, b_4\},  \{a_5, b_5\},  \{a_6, b_6\},  \{a_7, b_7\},  \{b_0, c_7\},\\
\{b_1, c_4\},  \{b_2, c_5\},  \{b_3, c_6\}\}$,\\
$\{\{a_0, b_7\}, \{a_1, b_4\}, \{a_2, b_5\}, \{a_3, b_6\}, \{a_4, c_6\},  \{a_5, c_7\},  \{a_6, c_4\},  \{a_7, c_5\},  \{b_0, c_1\},\\
\{b_1, c_2\},  \{b_2, c_3\},  \{b_3, c_0\}\}$.

\end{proof}

\begin{lemma}
\label{lemmaD5} There exists a \/ $(K_2, K_{1,3})$-URD$(24;5,12)$.

\end{lemma}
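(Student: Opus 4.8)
The plan is to obtain a $(K_2,K_{1,3})$-URD$(24;5,12)$ by applying Theorem \ref{thmR2} with the parameters $g=8$, $t=1$, $u=3$, so that $v=gtu=24$. For ingredient $(1)$ we need a $4$-RGDD of type $8^3$; this is equivalent to three mutually orthogonal resolution classes, and such a design exists (it is a classical small $4$-RGDD, recorded in \cite{CD}). Note that here $h=\frac{g(u-1)}{3}=\frac{8\cdot 2}{3}$ is not an integer, so this particular choice does not fit the hypotheses of Theorem \ref{thmR2}; I would instead take $g=2$, $t=4$, $u=3$ (again $v=gtu=24$), for which a $4$-RGDD of type $2^3$ does not exist either. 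So the clean route is $g=2$, $t=1$, $u=12$: a $4$-RGDD of type $2^{12}$ is not in Lemma \ref{lemma R1}. Hence I would instead use the direct product structure of Lemma \ref{lemmaD4}.

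Concretely, the most promising approach is to invoke Theorem \ref{thmR2} with $g=1$, $t=8$, $u=3$: ingredient $(1)$ is a $4$-RGDD of type $1^3$, which is trivially the (single) partition of $3$ points — but $4$ does not divide $3$, so this fails as well. The structural fact that actually delivers the result is Lemma \ref{lemmaD4}: a $(K_2,K_{1,3})$-URGDD$(4,8)$ of type $8^3$ is a uniformly resolvable decomposition of $K_{8,8,8}$ into $4$ one-factors and $8$ three-star classes. To build $K_{24}$ from $K_{8,8,8}$ it remains to decompose the three groups, each a copy of $K_8$, compatibly. By Lemma \ref{lemmaD2} there is a $(K_2,K_{1,3})$-URD$(8;1,4)$ on each group. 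Superimposing the URGDD on the three $K_8$'s: combine the $4$ one-factors of the URGDD with the $1$ one-factor from each (parallel, since the three $K_8$-resolutions can be aligned) to get $5$ one-factors of $K_{24}$, and the $8$ three-star classes of the URGDD together with the $4$ three-star classes on each $K_8$ give $8+4=12$ three-star classes — each class extends to a full parallel class of $K_{24}$ because on the other two groups we use a one-factor from their URD. After checking the bookkeeping, $5+\lfloor\cdot\rfloor$ and the edge count $2\cdot 5 + 3\cdot 12 = 46 = 2(24-1)$ confirm consistency with Lemma \ref{lemmaP0}, and $(5,12)\in J(24)$ since $24\equiv 0\pmod{12}$ and $(5,12)=(v-1-6x,4x)$ with $x=3$.

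The one subtlety — and the main thing to verify carefully — is the alignment of classes so that every class of the final decomposition is a genuine parallel class covering all $24$ points exactly once. The URGDD of type $8^3$ has $4+8=12$ classes, each covering all $24$ points; the URD$(8;\cdot)$ on a single group has $1+4 = 5$ classes, each covering that group's $8$ points. To stitch them, I would pair up classes across the three groups' URDs (there are $5$ classes each, all "same type" up to the split $1+4$), getting $5$ classes on the union of the three groups, and then these $5$ classes must be merged with the $12$ classes of the URGDD: $5$ of the URGDD's classes absorb the $5$ group-classes (a URGDD $K_2$-class plus three group $K_2$-classes is a $K_{24}$ one-factor; a URGDD three-star class plus three group $K_2$-classes is still a valid three-star class on $K_{24}$, since the group edges can be taken as degenerate... no — a three-star class must consist only of three-stars). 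So the correct merge is: the $4$ one-factors of the URGDD combine with nothing extra but must be completed on the groups, hence each pairs with three group one-factors — but there is only one group one-factor available per group. This forces a more careful count, and resolving exactly how the $12$ URGDD classes and the $5+5+5$ group classes interleave into $5$ one-factors and $12$ three-star classes of $K_{24}$ is the crux; the hypotheses of Theorem \ref{thmR2} are precisely designed so that $J_2 + h*J_1$ does this arithmetic automatically, so the cleanest write-up applies that theorem with a valid $(g,t,u)$ — most likely $g=2$, $t=4$, $u=3$ using a $4$-RGDD of type $2^3$ replaced by the direct construction in Lemma \ref{lemmaD4} as the actual engine, and Lemma \ref{lemmaD2} plus Lemma \ref{lemmaD1} supplying $J_2$ and $J_1$.
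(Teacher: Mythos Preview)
Your eventual approach---take the $(K_2,K_{1,3})$-URGDD$(4,8)$ of type $8^3$ from Lemma~\ref{lemmaD4} and fill each group with a $(K_2,K_{1,3})$-URD$(8;1,4)$ from Lemma~\ref{lemmaD2}---is exactly what the paper does, and it works. But your write-up never settles on this, and the ``subtlety'' paragraph reveals a genuine misunderstanding that makes the argument look broken when it is not.

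The point you are missing is that a parallel class of a URGDD of type $8^3$ already covers all $24$ points: it is a factor of $K_{8,8,8}$, whose vertex set has size $24$. So the $4$ one-factors and $8$ three-star classes of the URGDD are already full parallel classes of $K_{24}$; they need no completion on the groups. Separately, the three copies of the URD$(8;1,4)$ sit on disjoint $8$-sets; aligning them class by class (the $i$-th class of each copy together) yields $1$ further one-factor and $4$ further three-star classes on all $24$ points. The totals are $4+1=5$ and $8+4=12$, and the edge sets are disjoint because the URGDD uses only cross-group edges while the fillings use only within-group edges. There is no merging of URGDD classes with group classes, and no need to ``absorb'' anything.

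Your detours through Theorem~\ref{thmR2} with $(g,t,u)=(8,1,3)$, $(2,4,3)$, $(2,1,12)$, $(1,8,3)$ are all dead ends, as you note; drop them. The filling argument above is self-contained and does not need Theorem~\ref{thmR2} as a wrapper.
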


\begin{proof}
Take a $({{K_2,K_{1,3}}})$-URGDD$(4,8)$  of type $8^{3}$, which exists by Lemma {{\ref{lemmaD4}}}. Place on each of the groups a copy of a $(K_2, K_{1,3})$-URD$(8; 1,4)$ which exists by
 Lemma {{\ref{lemmaD2}}}. This completes the proof.
\end{proof}

\begin{lemma}
\label{lemmaD6}  There exists \/ $(K_2, K_{1,3})$-URGDD$(0,8)$ of type\/
$4^{4}$.
\end{lemma}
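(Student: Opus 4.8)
The goal is to construct a $(K_2,K_{1,3})$-URGDD$(0,8)$ of type $4^4$, i.e.\ a decomposition of the complete $4$-partite graph $K_{4,4,4,4}$ into $8$ parallel classes, each consisting only of copies of $K_{1,3}$. First I would check the arithmetic so I know exactly what I am aiming at: $K_{4,4,4,4}$ has $16$ vertices and $\binom{4}{2}\cdot 4^2 = 96$ edges; each $3$-star class on $16$ points uses $4$ stars and covers $12$ edges, so $8$ classes cover $96$ edges — the count is consistent, and in particular every edge between two distinct groups must be used exactly once.

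The approach I favor is a direct construction exploiting a cyclic (or rotational) structure, analogous to the explicit lists in Lemmas~\ref{lemmaD1}--\ref{lemmaD4}. Take the point set to be $\mathbb{Z}_4\times\{1,2,3,4\}$ with groups $G_j=\mathbb{Z}_4\times\{j\}$, and look for a small number of ``base'' parallel classes that can be developed by adding $1$ to the first coordinate (mod $4$). A single base class developed through $\mathbb{Z}_4$ gives $4$ classes; two suitably chosen base classes give the required $8$. In each base class the four centers should be one point from each group, with the three leaves of a star centered in $G_j$ distributed so that, over the orbit, every pair $(G_i,G_j)$ with $i\neq j$ has each of its $16$ edges covered exactly once. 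A convenient way to organize this: in the first base class let each center $c\in G_j$ send its three edges to the three groups $G_i$, $i\neq j$ (one leaf per other group), with leaf-offsets chosen from $\mathbb{Z}_4$; in the second base class, have each center send two (or zero) of its edges ``within a pair of groups'' so as to mop up the offsets not yet used. Checking that the leaf-offsets used on the pair $\{G_i,G_j\}$ across the two orbits are exactly a complete system modulo $4$ is a finite verification.

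Alternatively — and this may actually be the cleaner route to write down — I would try to build it from already-established objects. For instance, a $4$-RGDD of type $4^4$ exists by Lemma~\ref{lemma R1} (the case $t=4\equiv 1\pmod 3$); each of its parallel classes is a set of $K_4$'s partitioning the $16$ points, and a single $K_4$ on vertices $\{w,x,y,z\}$ decomposes into the two $3$-stars $(w;x,y,z)$ and \dots\ no, that only gives $2$ stars covering $6$ of the $6$ edges of $K_4$, which is wrong since a $K_4$ has exactly $6$ edges and one $3$-star has $3$: so each $K_4$ splits into exactly two edge-disjoint $3$-stars, e.g.\ $(w;x,y,z)$ together with a $3$-star on the triangle $\{x,y,z\}$ — but a triangle is not a $3$-star. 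Hence a $K_4$ does \emph{not} decompose into $3$-stars, and this shortcut fails; I mention it only to flag the tempting dead end. The honest plan is therefore the explicit/rotational construction of the previous paragraph. The main obstacle is precisely this bookkeeping: ensuring simultaneously that (i) in every one of the $8$ classes each of the $16$ points is the center or a leaf of exactly one star (a ``perfect'' star-factor), and (ii) globally every cross-group edge is covered exactly once. Since everything lives on $16$ points with a transparent $\mathbb{Z}_4$-action, this reduces to a short finite check, and I would present the two base classes explicitly and then tabulate the leaf-offsets on each of the six group-pairs to confirm completeness, exactly in the style of the earlier small-case lemmas.
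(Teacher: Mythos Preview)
Your plan is sound and is the same idea the paper uses: an explicit construction on $16$ points developed under a $\mathbb{Z}_4$-action on the subscripts. The only difference is organizational. You propose two base \emph{classes} (centres spread one per group) each developed into four classes; the paper instead gives eight base \emph{stars}, one per class, each of whose $\mathbb{Z}_4$-orbit is already a full parallel class. Thus in the paper's version all four centres of a given class lie in the \emph{same} group and every leaf goes to a distinct other group. Concretely, with groups $\{a_i\},\{b_i\},\{c_i\},\{x_i\}$ ($i\in\mathbb{Z}_4$), the eight base stars are
\[
(a_1;b_2,c_3,x_2),\ (b_1;a_3,c_3,x_3),\ (c_1;b_2,a_2,x_3),\ (x_1;b_2,a_3,c_2),
\]
\[
(a_1;b_1,c_1,x_1),\ (b_1;a_2,c_1,x_1),\ (c_1;b_4,a_4,x_1),\ (x_1;b_4,a_2,c_4).
\]
This choice makes the verification marginally cleaner than yours: for each ordered pair of groups one only has to check that the leaf-offsets appearing in the two classes centred in the first group exhaust $\mathbb{Z}_4$. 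Your ``centres in different groups'' variant would work too, but the bookkeeping is a little heavier. Your dead-end analysis of the $4$-RGDD shortcut is correct: $K_4$ does not decompose into $3$-stars, so that route is indeed blocked.
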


\begin{proof}
Take the groups to be $\{x_1,x_2,x_3, x_4\}$, $\{a_1,a_2,a_3, a_4\}$, $\{b_1,b_2,b_3,, b_4\}$ and $\{c_1,c_2,c_3,
c_4\}$ and the classes are obtained by reducing subscripts modulo 4 the following base blocks:\\
$\{(a_1;b_2, c_3, x_2)\}$, $\{(b_1;a_3, c_3, x_3)\}$, $\{(c_1;b_2, a_2, x_3)\}$, $\{(x_1;b_2, a_3, c_2)\}$,
$\{(a_1;b_1, c_1, $ $x_1)\}$, $\{(b_1;a_2, c_1, x_1)\}$, $\{(c_1;b_4, a_4, x_1)\}$, $\{(x_1;b_4, a_2, c_4)\}$.

\end{proof}

\begin{lemma}
\label{lemmaD7} There exists a \/ $(K_2, K_{1,3})$-IURD$(16,4;[3,0],[0,8])$.

\end{lemma}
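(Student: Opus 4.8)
The plan is to build the required incomplete design by combining the group divisible design of Lemma~\ref{lemmaD6} with $1$-factorizations of $K_4$. Write the $16$ points as $X\cup A\cup B\cup C$ with $|X|=|A|=|B|=|C|=4$, and take $X$ to be the hole. Then the graph $K_{16}-K_4$ (with hole $X$) is the edge-disjoint union of the complete multipartite graph $K_{4,4,4,4}$ on the four parts $X,A,B,C$ together with three copies of $K_4$, namely the complete graphs on $A$, on $B$ and on $C$; indeed the hole contributes no internal edges, and every edge of $K_{16}-K_4$ either joins two distinct parts or lies inside one of $A,B,C$.

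First I would take a $(K_2,K_{1,3})$-URGDD$(0,8)$ of type $4^4$ on the parts $X,A,B,C$, which exists by Lemma~\ref{lemmaD6}. Its $8$ parallel classes of $3$-stars each cover all $16$ points, so they will serve as the $8$ \emph{full} classes of $3$-stars of the design. Next, fix a $1$-factorization of $K_4$ into the three perfect matchings $M_1,M_2,M_3$, and place a copy of it on each of $A$, $B$ and $C$, say $M_i^A,M_i^B,M_i^C$ for $i=1,2,3$. For each $i$ the union $M_i^A\cup M_i^B\cup M_i^C$ is a perfect matching on the $12$ points of $A\cup B\cup C$; these three matchings are the $3$ \emph{partial} $1$-factors, each missing exactly the hole $X$. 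Together these $8+3$ classes use every edge of $K_{16}-K_4$ exactly once, so they constitute a $(K_2,K_{1,3})$-IURD$(16,4;[3,0],[0,8])$.

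I do not expect a genuine obstacle here: the only thing to check is that the pieces fit, which is a short counting verification. The graph $K_{16}-K_4$ has $114$ edges, equal to $8\cdot 12+3\cdot 6$; a point of $A\cup B\cup C$ has degree $15$, accounted for by degree $12$ in $K_{4,4,4,4}$ and degree $1$ in each of the three partial $1$-factors, while a point of $X$ has degree $12$, lying entirely inside $K_{4,4,4,4}$. If a self-contained statement is preferred, the same design can instead be exhibited directly by listing the eight $3$-star classes produced by Lemma~\ref{lemmaD6} and adjoining the three aligned matchings $M_i^A\cup M_i^B\cup M_i^C$.
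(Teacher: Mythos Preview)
Your proof is correct and follows essentially the same approach as the paper: take the $(K_2,K_{1,3})$-URGDD$(0,8)$ of type $4^4$ from Lemma~\ref{lemmaD6}, use its eight classes as the full $3$-star classes, and fill the three non-hole groups with a $(K_2,K_{1,3})$-URD$(4;3,0)$ (i.e.\ a $1$-factorization of $K_4$) to obtain the three partial $1$-factors. The only difference is cosmetic---you spell out the $1$-factorization and the edge count explicitly, whereas the paper simply invokes the URD$(4;3,0)$.
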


\begin{proof}
Start from the $(K_2, K_{1,3})$-URGDD$(0,8)$  of type $4^{4}$ of Lemma
\ref{lemmaD6} and fill in the  groups $\{a_1, a_2, a_3, a_4\}$, $\{b_1, b_2, b_3, b_4\}$ and $\{c_1, c_2, c_3, c_4\}$ with  a copy of a  $(K_2, K_{1,3})$-URD$(4;3,0)$ to obtain
a  \/ $(K_2, K_{1,3})$-IURD$(16,4;[3,0],[0,8])$ with $\{x_1,x_2,x_3, x_4\}$ as hole.
\end{proof}

\begin{lemma}
\label{lemmaD8} There exists a \/ $(K_2, K_{1,3})$-URD$(16;3,8)$.
\end{lemma}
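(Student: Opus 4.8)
The plan is to build a $(K_2, K_{1,3})$-URD$(16;3,8)$ out of the incomplete design of Lemma~\ref{lemmaD7} by filling in its hole. Recall that a $(K_2, K_{1,3})$-IURD$(16,4;[3,0],[0,8])$ is a decomposition of $K_{16}-K_4$ with a hole $H$ of size $4$, consisting of $3$ partial $1$-factors covering the $12$ non-hole points, together with $8$ full classes of $3$-stars covering all $16$ points. The natural move is to adjoin to this structure a resolvable decomposition of the $K_4$ sitting on $H$ that produces exactly $3$ classes, so that each of those three classes can be merged with one of the three partial $1$-factors to yield a genuine (full) $1$-factor of $K_{16}$.

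First I would invoke the existence of a $(K_2, K_{1,3})$-URD$(4;3,0)$ --- i.e. a $1$-factorization of $K_4$ into $3$ perfect matchings --- which is elementary (and is in any case already used in the proof of Lemma~\ref{lemmaD7}). Place a copy of this $1$-factorization on $H=\{x_1,x_2,x_3,x_4\}$. Then I would pair up its three $1$-factors with the three partial $1$-factors of the IURD: since a partial $1$-factor covers the $12$ points outside $H$ and a $1$-factor of $K_4$ covers the $4$ points of $H$, their union is a $1$-factor of $K_{16}$. This gives $3$ full $1$-factors. Together with the $8$ full $3$-star classes already present in the IURD, one obtains $3+8$ full classes partitioning all edges of $K_{16}$, i.e. a $(K_2, K_{1,3})$-URD$(16;3,8)$. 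Finally I would check consistency against the necessary conditions: Equation~(1) gives $2\cdot 3 + 3\cdot 8 = 30 = 2(16-1)$, and $(3,8)\in J(16)$ since $16\equiv 4\pmod{12}$ and $(3,8)=(v-1-6x,4x)$ with $x=2$, so the parameters are admissible.

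I do not anticipate a real obstacle here: the only thing to verify is that the $3$ partial $1$-factors of the IURD and the $3$ edges-only classes of $K_4$ can be matched up one-to-one, which is immediate since both lists have length $3$ and merging a partial $1$-factor on the $12$ outside points with a $1$-factor on the $4$ hole points always yields a valid $1$-factor of $K_{16}$ (every point is covered exactly once). The mild ``bookkeeping'' point --- making sure no edge is used twice --- is automatic: the edges inside $H$ come only from the $K_4$ we fill in, and the edges meeting the outside come only from the IURD. Thus the proof is a one-line application of the standard filling construction, and I would phrase it exactly as: start from the $(K_2, K_{1,3})$-IURD$(16,4;[3,0],[0,8])$ of Lemma~\ref{lemmaD7}, fill the hole with a $1$-factorization of $K_4$, and combine each of its three $1$-factors with one partial $1$-factor of the IURD.
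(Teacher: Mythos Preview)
Your proposal is correct and is precisely the intended argument: the paper's own proof is the single sentence ``The assertion follows by Lemma~\ref{lemmaD7},'' which unpacks exactly as you describe --- fill the hole of the $(K_2,K_{1,3})$-IURD$(16,4;[3,0],[0,8])$ with a $1$-factorization of $K_4$ and merge its three $1$-factors with the three partial $1$-factors.
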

\begin{proof}
The assertion follows by Lemma \ref{lemmaD7}.
\end{proof}\

\begin{lemma}
\label{lemmaD9}  There exists a \/ $(K_2, K_{1,3})$-IURD$(28,4;[3,0],[0,16])$.

\end{lemma}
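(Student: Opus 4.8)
The goal is a $(K_2, K_{1,3})$-IURD$(28,4;[3,0],[0,16])$, i.e.\ a uniformly resolvable decomposition of $K_{28}-K_4$ in which the $4$ hole-points carry no classes among the partial classes, there are $3$ partial $1$-factors (and $0$ partial $3$-star classes) covering the $24$ non-hole points, and $16$ full $3$-star classes covering all $28$ points. First I would check the count: on the $24$ non-hole points we must use $\binom{24}{2}+24\cdot 4 = 276+96 = 372$ edges; the $3$ partial $1$-factors use $3\cdot 12 = 36$ edges on the non-hole set, and the $16$ full $3$-star classes each use $3\cdot 7 = 21$ edges, totalling $336$ edges, of which $16\cdot\frac{4\cdot 3}{?}$ touch the hole\,\dots\ more simply, a full $3$-star class on $28$ points has $7$ stars and uses $21$ edges, contributing some edges inside the hole and some between hole and exterior; since $K_4$ (the hole) is \emph{removed}, the full classes must avoid all $6$ edges inside the hole, so each full class is a $3$-star factor of $K_{28}-K_4$. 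Everything is consistent with Equation~(1) applied with a hole, so the arithmetic is fine and the task is genuinely a construction.

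The plan is to mimic the structure of Lemma~\ref{lemmaD7}: build a suitable $(K_2,K_{1,3})$-URGDD on $24$ points arranged in groups compatible with a size-$4$ hole, then fill. Concretely, I would look for a $4$-RGDD or a $(K_2,K_{1,3})$-URGDD of type $4^{6}$ on $24$ points (six groups of size $4$): a $4$-RGDD of type $4^{6}$ would have $\frac{4\cdot 5}{3}$, which is not an integer, so that exact object does not exist; instead I would use a $(K_2,K_{1,3})$-URGDD$(r,s)$ of type $4^{6}$ directly, or better, follow the $h$-point-hole paradigm with $h=4$: take the $24$ exterior points as $6$ groups of size $4$, adjoin the hole $H$ of size $4$, and seek a decomposition of $K_{28}-K_4$ in which $16$ full $3$-star classes are assembled from an ingredient GDD while $3$ partial $1$-factors live only on the $24$ exterior points. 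The cleanest route is a direct small-design construction analogous to Lemma~\ref{lemmaD4}/\ref{lemmaD6}: label the exterior points, exhibit $16$ explicit $3$-star factors of $K_{28}-K_4$ (each a perfect collection of $7$ stars covering all $28$ points and using no hole-internal edge), then check that the leftover graph on the $24$ exterior points (after removing those $16\cdot 21 = 336$ edges, of which the hole-incident ones number $4\cdot 24 - 0 = $ all $96$ hole–exterior edges plus the rest inside the exterior) is a $3$-regular graph on $24$ vertices, hence decomposes into $3$ $1$-factors. Indeed: the exterior carries $372$ edges; the full classes remove all $96$ hole–exterior edges and therefore $336-96 = 240$ exterior edges, leaving $372-240 = 132 = 3\cdot 44$? — wait, $3$ $1$-factors on $24$ points have $36$ edges, not $132$; so in fact the $16$ full classes must remove $372-36 = 336$ exterior edges, meaning the full classes contain \emph{no} hole–exterior edges, which is impossible since each full class covers the $4$ hole points and every edge at a hole point in a removed-$K_4$ graph goes to the exterior. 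The resolution: each full $3$-star class covers the $4$ hole points using at most $4$ stars; stars centred in the hole reach $3$ exterior points each, stars centred outside reach a hole point as a leaf. Recount: across $16$ full classes the hole points are covered $16\cdot 4 = 64$ times, each coverage consuming one hole–exterior edge (if leaf) or three (if centre). Let the hole points be centres a total of $c$ times and leaves $64-c$ times across all full classes; then hole–exterior edges used $= 3c + (64-c) = 2c+64$, and this must equal $96$ (all such edges), forcing $c = 16$, i.e.\ exactly one hole point is a star-centre in each full class. That is the real design constraint, and it is clearly satisfiable.

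So the detailed construction I would carry out: on $V = (\mathbb{Z}_4 \times \{a,b,c,d,e,f\}) \cup H$ with $H$ the hole of size $4$, produce $16$ full $3$-star factors each containing exactly one star centred at a hole point (its $3$ leaves being exterior points) together with $6$ stars centred at exterior points, one of whose leaves is among the remaining $3$ hole points$\,$— arranged so that over the $16$ classes every hole–exterior edge is hit exactly once and every exterior–exterior edge is hit exactly once except for a $3$-regular residue; then fill that residue with $3$ $1$-factors. The most economical way to generate this is developing base blocks cyclically, exactly as in Lemmas~\ref{lemmaD4} and~\ref{lemmaD6}, possibly bootstrapping from the $(K_2,K_{1,3})$-IURD$(16,4;[3,0],[0,8])$ of Lemma~\ref{lemmaD7} by adding a further group of $12$ points handled via a $(K_2,K_{1,3})$-URGDD of type $4^{3}$ or $6^{2}$ $3$-star ingredient.

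The main obstacle is the simultaneous bookkeeping: one must choose the $16$ full $3$-star factors so that (i) each avoids all hole-internal edges, (ii) each covers all $28$ points exactly once, (iii) collectively they cover each hole–exterior edge exactly once, and (iv) the exterior edges they leave uncovered form a $3$-regular (hence $1$-factorable) graph on $24$ vertices. Getting a cyclic base-block system over $\mathbb{Z}_4$ (or $\mathbb{Z}_{12}$) that meets all four constraints at once — particularly reconciling the ``one hole-centre per class'' parity with a clean orbit structure — is the delicate part; once an explicit list is found, verifying it is routine, and filling the $3$-regular leftover with $3$ $1$-factors is immediate since every $3$-regular graph here will in fact be a union of even cycles or otherwise visibly $1$-factorable by inspection.
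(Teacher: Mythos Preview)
Your plan is the same as the paper's: a direct construction on $28$ points labelled $\{x,a,b,c,d,f,g\}\times\{1,2,3,4\}$ with hole $\{x_1,x_2,x_3,x_4\}$, sixteen full $3$-star classes given as four base classes developed over $\mathbb{Z}_4$ on the subscripts, and three explicit partial $1$-factors on the $24$ exterior points. Your preliminary analysis (edge counts, and the observation that $c=16$ hole-centres are needed across the sixteen full classes) is correct and matches what the paper's construction actually does, with exactly one hole point as a star centre in each full class.

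However, two points keep this from being a proof. First, you never produce the construction: the entire content of the lemma is the explicit block list, and ``develop base blocks cyclically'' is the method, not the result. There is no recursive or existence argument available here; the paper simply writes down four base $3$-star classes (each with seven stars) and three $1$-factors and lets the reader check. Second, your fallback claim that the leftover $3$-regular graph on $24$ vertices is ``immediately'' $1$-factorable is false in general: the Petersen graph is $3$-regular with no $1$-factorization, and nothing in your constraints (i)--(iv) prevents a Petersen-like component from appearing in the residue. So you cannot build the sixteen full classes first and then declare the rest done; the three partial $1$-factors must be controlled simultaneously with the star classes, which is exactly why the paper exhibits them explicitly. (A smaller slip: $c=16$ over sixteen classes forces one hole-centre per class only on average, not exactly, though the uniform choice is the natural one and is what the paper uses.)
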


\begin{proof}

Let the point set be $V=\{x, a, b, c, d, f, g\}\times\{1, 2, 3, 4\}$ and let\\ $\{x_1, x_2, x_3, x_4\}$ be the hole.

\begin{itemize}
\item
Take  16 classes of 3-stars on $V$ listed bellow:\\
$\{(x_i; a_{i+3}, b_{i+3},d_{i+3}), (a_i; a_{i+1}, c_{i+3},g_{i+2})$, $(b_i; b_{i+{{1}}}, c_{i+2},f_{i+3}), (d_i; d_{i+1}, g_{i+3},$ $f_{i+2})$,
 $(c_i; c_{i+1}, a_{i+2},x_{i+3}), (f_i; f_{i+1}, b_{i+2},x_{i+1})$,$ (g_i; g_{i+1}, x_{i+2},d_{i+2}), i\in Z_4\}$,\\
$\{(x_i; c_{i+2}, f_{i+1},g_{i+1}), (a_{i+3}; {{c}}_{i}, x_{i+3},b_{i})$, $(b_{i+2}; x_{i+2}, d_{i+2},f_{i+2}), (d_{i+1}; x_{i+1},b_{i+3},$ $ a_{i+2})$,
 $(c_{i+1}; a_{i+1}, g_{i+2},d_{i+3}), (f_i; {{c}}_{i+3}, a_{i},g_{i+3})$, $(g_i; f_{i+3}, b_{i+1},d_{i}), i\in Z_4\}$,\\
$\{(x_i; a_{i+{{1}}}, b_{i+1}, d_{i+1}), (f_{i+3}; a_{i+2}, x_{i+1},c_{i})$, $(c_{i+2}; x_{i+3}, d_{i+3},f_{i}), (g_{i+1}; x_{i+2}, d_{i},a_{i})$,\\
 $(a_{i+3}; g_{i+2}, b_{i+2}, g_{i+3}), (b_i; g_{i}, c_{i+1}, f_{i+1})$, $( d_{i+2}; {{b}}_{i+3}, f_{i+2}, c_{i+3}), i\in Z_4\}$,\\
$\{(x_i; f_{i}, c_{i}, g_{i}), (a_{i+1}; d_{i+1}, x_{i+3},b_{i+3})$, $(b_{i}; x_{i+2}, c_{i+3},a_{i}), (d_{i+3}; x_{i+1}, f_{i+2},a_{i+2})$,\\
 $(g_{i+3}; b_{i+1}, b_{i+2}, f_{i+3}), (f_{i+1}; a_{i+3}, d_{i}, c_{i+1})$, $( c_{i+2}; d_{i+{{2}}}, g_{i+1}, g_{i+2}), i\in Z_4\}$.

 \item
Take  three 1-factors on $\{\{a, b, c, d, f, g\}\times\{1, 2, 3, 4\}\}$:\\
$\{\{a_i, f_{i+3}\}, \{(b_i, d_{i+1}\}, \{c_i, g_{i+2}\}, i\in Z_4\}$, $\{\{a_i, d_{i+2}\}, \{b_i, c_{i}\}, \{f_i, g_{i+2}\}, i\in Z_4\}$,\\
$\{\{a_1, a_3\}, \{a_2, a_4\}, \{b_1, b_3\}$, $\{b_2, b_4\}, \{c_1, c_3\}, \{c_2, c_4\}$, $\{d_1, d_3\}, \{d_2, d_4\},
\{f_1, f_3\}$,\\ $\{f_2, f_4\}, \{g_1, g_3\}, \{g_2, g_4\}\}$.

\end{itemize}
\end{proof}

\begin{lemma}
\label{lemmaD10} There exists a \/ $(K_2, K_{1,3})$-URD$(28;3,16)$.
\end{lemma}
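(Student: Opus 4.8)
The plan is to derive Lemma~\ref{lemmaD10} directly from Lemma~\ref{lemmaD9}, exactly in the spirit of the preceding pairs (Lemma~\ref{lemmaD7}$\to$\ref{lemmaD8} and Lemma~\ref{lemmaD6}$\to$\ref{lemmaD2}). Starting from the $(K_2, K_{1,3})$-IURD$(28,4;[3,0],[0,16])$ of Lemma~\ref{lemmaD9}, with hole $H=\{x_1,x_2,x_3,x_4\}$, I would fill the hole with a copy of a $(K_2, K_{1,3})$-URD$(4;3,0)$ on $H$, i.e.\ a $1$-factorization of $K_4$ into $3$ one-factors. The $3$ partial $1$-factors of the IURD (those covering $V\setminus H$) combine with the $3$ one-factors of the URD$(4;3,0)$ to yield $3$ genuine $1$-factors of $K_{28}$, while the $16$ full classes of $3$-stars of the IURD remain untouched and already cover all $28$ points. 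This produces a $(K_2, K_{1,3})$-URD$(28;3,16)$, and one checks $2\cdot 3 + 3\cdot 16 = 54 = 2(28-1)$, so the parameter count is consistent.

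The only ingredient I need beyond Lemma~\ref{lemmaD9} is the existence of a $(K_2, K_{1,3})$-URD$(4;3,0)$, which is simply a $1$-factorization of $K_4$; this exists trivially since $4$ is even (three perfect matchings $\{\{1,2\},\{3,4\}\}$, $\{\{1,3\},\{2,4\}\}$, $\{\{1,4\},\{2,3\}\}$). Thus the proof is a one-line appeal: \emph{The assertion follows by Lemma~\ref{lemmaD9}} (filling the hole with a $1$-factorization of $K_4$).

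There is no real obstacle here: the combinatorial work was already done in Lemma~\ref{lemmaD9}, where the three partial $1$-factors on $\{a,b,c,d,f,g\}\times\{1,2,3,4\}$ were explicitly constructed so as to be completable to full $1$-factors of $K_{28}$ by adjoining a $1$-factor on the four hole points. The one point worth a second's care is verifying that the three listed partial $1$-factors of Lemma~\ref{lemmaD9}, together with the three matchings on $\{x_1,x_2,x_3,x_4\}$, are mutually edge-disjoint and jointly use each of the three ``slots'' on the hole exactly once — but this is immediate from the construction, since the partial classes omit $H$ entirely and a $1$-factorization of $K_4$ partitions the edges within $H$. Hence Lemma~\ref{lemmaD10} follows at once.
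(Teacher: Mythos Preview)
Your proof is correct and matches the paper's own argument exactly: the paper's proof of Lemma~\ref{lemmaD10} is the single line ``The assertion follows by Lemma~\ref{lemmaD9},'' and your proposal simply unpacks that appeal by filling the hole $\{x_1,x_2,x_3,x_4\}$ with a $1$-factorization of $K_4$.
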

\begin{proof}
The assertion follows by Lemma {{\ref{lemmaD9}}}.
\end{proof}

\begin{lemma}
\label{lemmaD11}  There exists a \/ $(K_2, K_{1,3})$-IURD$(20,8;[1,4],[0,8])$.
\end{lemma}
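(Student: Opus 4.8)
## Proof proposal for Lemma~\ref{lemmaD11}

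The plan is to build a $(K_2, K_{1,3})$-IURD$(20,8;[1,4],[0,8])$ directly by an explicit construction, in the same spirit as the earlier incomplete designs (Lemmas~\ref{lemmaD7} and~\ref{lemmaD9}). Here the hole has size $h=8$, the total point set has size $20$, so off the hole there are $12$ points. A $[1,4]$ partial resolution on the $12$ non-hole points uses $1$ one-factor and $4$ star-classes, accounting for $1+3\cdot 4 = 13 = 12-1$ edges at each non-hole vertex among themselves — i.e.\ these classes exactly decompose a $K_{12}$ on the non-hole points. A $[0,8]$ set of full classes uses $8$ star-classes on all $20$ points; at a hole vertex this contributes degree $8\cdot 3 = 24$, but a hole vertex must have total degree $19$ in $K_{20}-K_8$, and $19 - 0 = 19$; something is off, so one must recheck: in a full star-class on $20$ points, a hole vertex sits in some star with degree $1$ or $3$, and across $8$ classes the degrees at a hole vertex must sum to $19$, forcing (as in Lemma~\ref{lemmaP0}) a $3:1$ split, $19 = 3a+b$ with $a+b=8$, impossible for integers --- hence the full classes cannot all be $[0,8]$ unless I have miscounted the hole interaction. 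The resolution is that a full class on $20$ points where $8$ points form a hole is \emph{not} required to be a partition into $K_{1,3}$'s covering hole vertices nontrivially; rather the count is $3\cdot 8 = 24$ at a non-hole vertex from the full star-classes plus $13$ from the partial classes gives $37 \ne 19$. So in fact the correct reading is: a non-hole vertex has degree $19$ total, split as ($1+3\cdot 4=13$) inside the partial classes and ($3\cdot 2 = 6$) in the full classes --- meaning the $8$ ``full'' star-classes should be read so that a non-hole vertex has degree $3$ in exactly $2$ of them and degree $1$ in the other $6$, while a hole vertex has degree $3$ in $\ge 1$ of them; this is the usual mixed-degree bookkeeping and it does close.

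Concretely, I would take the point set $V = \{a,b,c,d,e,f,g,h,i,j,k,\ell\} \cup H$ with $|H| = 8$, or better exploit a $\Z_4$-action: set the $12$ non-hole points to be $\{x,y,z\}\times\Z_4$ and the hole $H = \{u,w\}\times\Z_4$, as was done to great effect in Lemma~\ref{lemmaD9}. First I would search for a $(K_2,K_{1,3})$-URGDD of a suitable type on $20$ points with one group of size $8$ (the eventual hole) --- note a URGDD of type $8^1 12^1$ would not make sense, but a decomposition of $K_{20}-K_8$ is exactly an incomplete design, so instead I would look for a GDD-like skeleton: partition $K_{20}-K_8$'s edges into (i) the $K_{12}$ on non-hole points and (ii) the complete bipartite graph $K_{8,12}$ between hole and non-hole. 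For (i) use a $(K_2,K_{1,3})$-URD$(12;5,4)$ from Lemma~\ref{lemmaD3}, keeping $1$ of its $5$ one-factors and all $4$ star-classes, and redistribute the other $4$ one-factors of $K_{12}$ by merging them with partial structure on $K_{8,12}$; for (ii), decompose $K_{8,12}$ into $8$ ``partial'' pieces that each, together with a slice of the leftover $K_{12}$-one-factors, forms a full $K_{1,3}$-class on all $20$ points. The hard part will be exactly this merging: arranging $4$ one-factors of the non-hole $K_{12}$ plus a decomposition of $K_{8,12}$ into $8$ star-classes spanning all $20$ points so that every vertex (hole and non-hole alike) is covered exactly once per class and the degree bookkeeping of Lemma~\ref{lemmaP0} is respected locally in each class.

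I expect the cleanest route is to mimic Lemma~\ref{lemmaD9} verbatim: use a $\Z_4$-action, present the $8$ full star-classes as two $\Z_4$-orbits of ``class templates'' (each template a list of $5$ stars on representatives, developed mod $4$ to give $4$ classes --- here I need $8$ full classes, so two templates), plus one $\Z_4$-orbit giving the single partial one-factor and the $4$ partial star-classes on the non-hole points (which is just the $\Z_4$-structured part of a URD$(12;1,4)$-like object, obtainable by reworking Lemma~\ref{lemmaD3} or directly). The verification is then finite and mechanical: check that in each developed class every one of the $20$ points appears exactly once, that hole points never appear in the partial classes, and that every edge of $K_{20}-K_8$ is covered exactly once (equivalently, count: $8$ full star-classes give $8\cdot(20/4)\cdot 3 = 120$ edges, the partial one-factor gives $6$ edges, the $4$ partial star-classes give $4\cdot 3\cdot 3 = 36$ edges, total $162 = \binom{20}{2} - \binom{8}{2} = 190 - 28$, which checks). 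The main obstacle is purely the combinatorial search for the two class templates with the right $\Z_4$-orbit structure on the $K_{8,12}$ edges; once found, everything else is routine bookkeeping.
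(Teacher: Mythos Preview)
Your proposal is a search strategy rather than a proof: you never actually produce the design, and you explicitly defer ``the combinatorial search for the two class templates'' as the main obstacle still to be overcome. For a lemma of this type that is exactly the content, so what you have written is not yet a proof.

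There is also a concrete error in your preliminary degree check. A hole vertex in $K_{20}-K_8$ has degree $12$, not $19$: it is joined to the $12$ non-hole points and to none of the other $7$ hole points. With the correct value, the system $a+b=8$, $3a+b=12$ gives $a=2$, $b=6$, which is perfectly consistent; your ``impossible for integers'' conclusion came from using $19$ on the wrong side. Once this is fixed the bookkeeping closes cleanly and there is no need for the hedging paragraph that follows. Your global edge count ($6+36+120=162=\binom{20}{2}-\binom{8}{2}$) is correct.

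The paper's proof takes a more pedestrian but complete route: it simply writes the point set as $\mathbb{Z}_{20}$ with hole $\{0,1,\dots,7\}$ and lists all $8$ full $3$-star classes, all $4$ partial $3$-star classes, and the single partial $1$-factor explicitly, with no group action and no reuse of Lemma~\ref{lemmaD3}. Your idea of imposing a $\mathbb{Z}_4$-symmetry and recycling the URD$(12;5,4)$ for the partial part is appealing, but note that it forces the $24$ leftover non-hole edges to be four \emph{specific} $1$-factors of $K_{12}$, which is an extra constraint on the search for the full classes and may well make it harder rather than easier. If you want to finish along your lines, the cleanest fix is to drop the recycling: search directly (with or without $\mathbb{Z}_4$) for the eight full classes first, and then check that what remains on the $12$ non-hole points admits a $[1,4]$ resolution.
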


\begin{proof}
Let the point set be $V=Z_{20}$ and let $H=\{0,1,\ldots,7\}$ be the hole.
\begin{itemize}
 \item Take the 8 classes of 3-stars on $V$ listed bellow:\\
$\{(0; 8, 9, 10),  (1; 11, 12, 13),   (2; 14, 15, 16), (17; 3, 4, 5),  (18; 6, 7, 19)\}$,\\
$\{(0; 11, 12, 13), ( 1; 8, 9, 10), ( 2; 17, 18, 19), ( 14; 3, 4, 5), ( 15; 6, 7, 16)\}$,\\
$\{ (3; 8, 9, 10), (  4; 11, 12, 15 ), ( 5; 16, 18, 19), ( 13; 2, 6, 7), ( 14; 0 ,1, 17)\}$,\\
$\{ (3; 11, 12, 13), (  4; 8, 9, 16 ), (  6; 14, 17, 19), (  10; 2, 5, 7  ), ( 15; 0, 1, 18)\}$,\\
$\{(5; 8, 9, 11), (  7; 14, 16, 17 ), ( 10; 4, 6, 19 ), (  12; 2, 13, 15), (  18; 0, 1, 3)\}$,\\
$\{(6; 8, 9, 16  ), (  11; 2, 7, 10  ), ( 15; 3, 5, 13 ), (  17; 0, 1, 18  ), (  19; 4, 12, 14)\}$,\\
$\{ (7; 8, 12, 19  ), (  9; 2, 10, 14), ( 11; 6, 15, 18 ), ( 13; 4, 5, 17  ), (   16; 0, 1, 3)\}$,\\
$\{(8; 2, 10, 15 ), (  9; 7, 11, 13 ), ( 12; 5, 6 ,17), ( 18; 4, 14, 16 ), ( 19; 0, 1, 3)\}$.
\item Take the 4  partial  classes of 3-stars  on $V$ listed bellow:\\
$\{(8; 9, 12, 18    ), (    11; 13, 14, 19   ), (   17; 10, 15, 16)\}$,\\
$\{( 8; 11, 13, 17  ), (     10; 12, 15, 18  ), (    16; 9 ,14, 19)\}$,\\
$\{(9; 17, 18, 19  ), (     14 ;10, 12, 15 ), (     16 ;8, 11, 13)\}$,\\
$\{(12; 9, 11, 16   ), (    13 ;10, 14, 18 ), (     19 ;8 ,15, 17)\}$.
\item  Take the partial 1-factors on $V$:\\
$\{8,14\},\{9,15\},\{10,16\},\{11,17\},\{12,18\},\{13,19\}$.
\end{itemize}
\end{proof}

\begin{lemma}
\label{lemmaD12} There exists a \/ $(K_2, K_{1,3})$-URD$(20;1,{{12}})$.
\end{lemma}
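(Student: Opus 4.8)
The plan is to obtain the required design by filling the hole of the incomplete design constructed in the previous lemma. Specifically, I would start from the $(K_2, K_{1,3})$-IURD$(20,8;[1,4],[0,8])$ produced in Lemma~\ref{lemmaD11}, whose point set is $Z_{20}$ with hole $H=\{0,1,\ldots,7\}$. This structure supplies, on the $12$ points outside the hole, one partial $1$-factor and four partial classes of $3$-stars, together with eight full classes of $3$-stars covering all $20$ points.

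Next I would place on the hole $H$ a copy of a $(K_2, K_{1,3})$-URD$(8;1,4)$, which exists by Lemma~\ref{lemmaD2} (itself coming from the URGDD of type $2^4$ in Lemma~\ref{lemmaD1}). This contributes one $1$-factor and four classes of $3$-stars on the eight points of $H$.

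Then I would amalgamate the classes of matching type: the single partial $1$-factor on the outside points combines with the $1$-factor on $H$ to give one full $1$-factor on $Z_{20}$, and each of the four partial $3$-star classes on the outside points combines with one of the four $3$-star classes on $H$ to give four full $3$-star classes on $Z_{20}$. Adjoining these to the eight full $3$-star classes already present in the IURD yields one $1$-factor and $4+8=12$ classes of $3$-stars, i.e. a $(K_2, K_{1,3})$-URD$(20;1,12)$, as desired.

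This argument is entirely routine — it is the same hole-filling scheme used to derive Lemmas~\ref{lemmaD2}, \ref{lemmaD8} and \ref{lemmaD10} from their incomplete counterparts — so there is no real obstacle; the only point requiring (trivial) care is checking that the partial classes of the two ingredients can be paired up so that every point of $Z_{20}$ is covered exactly once in each resulting class, which is immediate since the partial classes of the IURD cover exactly the complement of $H$ and the classes of the URD$(8;1,4)$ cover exactly $H$.
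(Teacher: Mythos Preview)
Your proposal is correct and follows exactly the same approach as the paper: the paper's proof simply reads ``The assertion follows by Lemmas~\ref{lemmaD11} and~\ref{lemmaD2},'' which is precisely the hole-filling argument you have spelled out in detail.
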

\begin{proof}
The assertion follows by Lemmas \ref{lemmaD11}  and \ref{lemmaD2}.
\end{proof}

\begin{lemma}
\label{lemmaD13} There exists a \/ $(K_2, K_{1,3})$-URD$(v;3, \frac{2(v-4)}{3})$
for $v=88,424,568$.
\end{lemma}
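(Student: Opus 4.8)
The plan is to realize all three designs by a single application of Theorem~\ref{thmR2}, taking advantage of the fact that the three \emph{sporadic} $4$-RGDDs recorded in Lemma~\ref{lemma R1}, namely those of type $2^{22}$, $2^{106}$ and $2^{142}$, were evidently chosen with these three values of $v$ in mind: their numbers of points are $44$, $212$ and $284$, that is exactly $v/2$ for $v\in\{88,424,568\}$.

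Concretely, for each $v\in\{88,424,568\}$ I would put $g=2$, $t=2$ and $u=v/4\in\{22,106,142\}$, so that $v=gtu$, and then check the three hypotheses of Theorem~\ref{thmR2}. Hypothesis~(1), a $4$-RGDD of type $g^{u}=2^{u}$, is supplied by Lemma~\ref{lemma R1} (these are precisely the three listed sporadic types; note $u$ is even and $u\equiv1\pmod 3$, which is consistent with such an RGDD existing). Hypothesis~(2), a $(K_2,K_{1,3})$-URGDD$(r_1,s_1)$ of type $t^{4}=2^{4}$, is supplied by Lemma~\ref{lemmaD1} with $(r_1,s_1)=(0,4)$, so we may take $J_1=\{(0,4)\}$. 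Hypothesis~(3), a $(K_2,K_{1,3})$-URD$(gt;r_2,s_2)=$URD$(4;r_2,s_2)$, holds with $(r_2,s_2)=(3,0)$ since this is simply a $1$-factorization of $K_4$; hence $J_2=\{(3,0)\}$.

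Theorem~\ref{thmR2} then yields a $(K_2,K_{1,3})$-URD$(v;r,s)$ for every $(r,s)\in J_2+h\ast J_1$, where $h=\frac{g(u-1)}{3}=\frac{2(u-1)}{3}$ is an integer because $u\equiv1\pmod 3$. Since $J_1$ and $J_2$ are singletons, $J_2+h\ast J_1=\{(3,4h)\}$, and substituting $u=v/4$ gives $4h=\frac{8(u-1)}{3}=\frac{2(v-4)}{3}$. Thus the design obtained is exactly a $(K_2,K_{1,3})$-URD$(v;3,\frac{2(v-4)}{3})$, which settles $v=88,424,568$ simultaneously. I do not expect any real obstacle here: the argument is pure bookkeeping once one notices that the three exceptional RGDD types of Lemma~\ref{lemma R1} are tailor-made for these $v$, and the only points meriting a line of verification are the divisibility conditions ($u$ even, $u\equiv1\pmod3$) and the identity $4h=\frac{2(v-4)}{3}$, all of which hold for the three values in question.
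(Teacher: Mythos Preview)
Your proposal is correct and matches the paper's own proof essentially line for line: the paper also applies Theorem~\ref{thmR2} with $g=t=2$ and $u=v/4$, using the $4$-RGDDs of type $2^{22},2^{106},2^{142}$, the $(K_2,K_{1,3})$-URGDD$(0,4)$ of type $2^{4}$ from Lemma~\ref{lemmaD1}, and the trivial $(K_2,K_{1,3})$-URD$(4;3,0)$ on the groups.
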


\begin{proof}
Start from a 4-RGDD $\cG$ of type $2^{\frac{v}{4}}$ which exists for $v=88,424,568$ \cite{S3}. Give weight $2$ to
every point of $\cG$ and for each block of a given
resolution class of $\cG$ place a copy of a $(K_2,K_{1,3})$-RGDD$(0,4)$ of type
$2^{4}$ which exists by Lemma \ref{lemmaD1}. Fill each group of size 4 with a copy of a
$(K_2, K_{1,3})$-URD($4;3,0)$. Applying Theorem \ref{thmR2} with
$g=t=2$ and $u=\frac{v}{4}$, we obtain a \/ $(K_2, K_{1,3})$-URD$(v;3,\frac{2(v-4)}{3})$.
\end{proof}

\section{Main results}

\begin{lemma}
\label{lemmaC1} For every\/ $v\equiv 0\pmod{24}$ there exists a\/
$(K_2, K_{1,3})$-URD $(v;5,$ $\frac{2(v-6)}{3})$.

\end{lemma}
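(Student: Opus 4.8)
The plan is to derive every such $v$ in one step from Theorem \ref{thmR2}, using only ingredients already available: a $4$-RGDD from Lemma \ref{lemma R1}, the smallest $3$-star URGDD of the previous section, and a plain $1$-factorization.

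Write $v=24u$ with $u\geq 1$. I would invoke Theorem \ref{thmR2} with $g=3$, $t=2$ and with $4u$ groups in the RGDD, so that $gt\cdot(4u)=24u=v$. Its three hypotheses hold: $(1)$ a $4$-RGDD of type $3^{4u}$ exists by Lemma \ref{lemma R1}, since $4u\equiv 0\pmod 4$ and $4u\geq 4$; $(2)$ a $(K_2,K_{1,3})$-URGDD$(0,4)$ of type $2^{4}$ exists by Lemma \ref{lemmaD1}, so $(0,4)\in J_1$; $(3)$ a resolvable decomposition of $K_6$ into its five $1$-factors is a $(K_2,K_{1,3})$-URD$(6;5,0)$, so $(5,0)\in J_2$. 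That RGDD has $h=\tfrac{3(4u-1)}{3}=4u-1$ parallel classes, so Theorem \ref{thmR2} produces a $(K_2,K_{1,3})$-URD$(24u;r,s)$ for every $(r,s)\in J_2+(4u-1)\ast J_1$. Selecting $(5,0)+(4u-1)\cdot(0,4)=(5,\,16u-4)$ and observing that $16u-4=\tfrac{2(24u-6)}{3}=\tfrac{2(v-6)}{3}$ gives precisely the asserted $(K_2,K_{1,3})$-URD$(v;5,\tfrac{2(v-6)}{3})$.

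Since $4u\equiv 0\pmod 4$ for every $u\geq 1$, Lemma \ref{lemma R1} applies uniformly and no exceptional small cases appear (for $u=1$ this recovers Lemma \ref{lemmaD5}). I expect essentially no obstacle; the one delicate point is accepting the degenerate input in $(3)$: on $6$ points there is no $3$-star class (as $6\not\equiv 0\pmod 4$), so the only legitimate URD is the trivial $1$-factorization of $K_6$, and one should check that this is indeed all Theorem \ref{thmR2} needs on each blown-up group. If one wishes to avoid a $0$-star input altogether, an alternative is to split on the parity of $u$: for even $u$ take $g=3$, $t=4$, place the URGDD$(0,8)$ of type $4^4$ (Lemma \ref{lemmaD6}) on the blocks and the URD$(12;5,4)$ of Lemma \ref{lemmaD3} on the groups; for odd $u$ a separate small-order argument is then still required, which makes the uniform $g=3,\,t=2$ route preferable.
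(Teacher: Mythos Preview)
Your proof is correct and follows essentially the same route as the paper: apply Theorem~\ref{thmR2} with $g=3$, $t=2$ to a $4$-RGDD of type $3^{4u}$, using the URGDD$(0,4)$ of type $2^4$ from Lemma~\ref{lemmaD1} and the $1$-factorization of $K_6$ as inputs. The only difference is cosmetic: the paper handles $v=24$ separately by quoting Lemma~\ref{lemmaD5}, whereas you observe (correctly) that the general construction already covers $u=1$.
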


\begin{proof}
Let $v=24s$. The case $s=1$ corresponds to a $(K_2, K_{1,3})$-URD $(24;5,12)$ which exists by Lemma
\ref{lemmaD5}. For $s>1$, start from a $4$-RGDD of type $3^{4s}$  (\cite{CD}) and apply Theorem \ref{thmR2} with $t=2$  to obtain a $(K_2, K_{1,3})$-URD $(v;5, \frac{2(v-6)}{3})$
(the input designs are a $(K_2, K_{1,3})$-URGDD$(0,4)$ of type $2^{4}$ by Lemma \ref{lemmaD1}, and a $(K_2, K_{1,3})$-URD($6;5,0)$).
\end{proof}

\begin{lemma}
\label{lemmaC2} For every\/ $v\equiv 12\pmod{24}$ there exists a\/
$(K_2, K_{1,3})$-URD $(v;5,$ $ \frac{2(v-6)}{3})$.

\end{lemma}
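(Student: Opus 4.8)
The plan is to handle a short list of small orders directly and to obtain all the remaining ones from one application of Theorem~\ref{thmR3} fed with $4$-frames of type $12^{u}$.

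For the small orders, $v=12$ is the design of Lemma~\ref{lemmaD3} (here $\frac{2(12-6)}{3}=4$), and the orders $v\in\{36,60,84\}$, which lie below the reach of the recursive step, have to be produced by ad hoc constructions; it is convenient to build first the incomplete design $(K_2,K_{1,3})$-IURD$(36,12;[5,4],[0,16])$, which is needed anyway below, together with the two analogous small incomplete designs on $60$ and $84$ points, and then to complete each of them by filling the hole with the $(K_2,K_{1,3})$-URD$(12;5,4)$ of Lemma~\ref{lemmaD3}. It is worth recording at the start that, by Lemma~\ref{lemmaP2}, for every $v\equiv 0\pmod{12}$ the pair $(5,\frac{2(v-6)}{3})$ lies in $J(v)$: writing $s=4x$ in equation~(1), the value $r=5$ forces $x=\frac{v-6}{6}$, the largest admissible value, so $r=5$ is indeed the minimum number of $1$-factors recorded in Table~1.

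For the general case, let $v\equiv 12\pmod{24}$ with $v\ge 108$ and set $u=\frac{v-12}{24}$, an integer with $u\ge 4$; by the handbook (\cite{CD},\cite{S3}) there is a $4$-frame of type $12^{u}$. I would apply Theorem~\ref{thmR3} with $g=12$, $t=2$, this frame, and a hole of size $h=12$: as ingredient~(2) take the $(K_2,K_{1,3})$-URD$(12;5,4)$ of Lemma~\ref{lemmaD3}, so $(5,4)\in J_1$; as ingredient~(3) take the $(K_2,K_{1,3})$-URGDD$(0,4)$ of type $2^{4}$ of Lemma~\ref{lemmaD1}, so $(0,4)\in J_2$ and hence $(0,16)\in J_3=4\ast J_2$; and as ingredient~(4) take the $(K_2,K_{1,3})$-IURD$(36,12;[5,4],[0,16])$ built above, which is exactly a $(K_2,K_{1,3})$-IURD$(gt+h,h;[r_1,s_1],[r_3,s_3])$ with $(r_1,s_1)=(5,4)\in J_1$ and $(r_3,s_3)=(0,16)\in J_3$. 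Theorem~\ref{thmR3} then produces a uniformly resolvable decomposition of $K_{gtu+h}=K_{24u+12}$ for every $(r,s)\in J_1+u\ast J_3$; choosing $(5,4)+u\cdot(0,16)=(5,4+16u)$ and noting $4+16u=\frac{2(24u+6)}{3}=\frac{2(v-6)}{3}$ gives precisely the desired $(K_2,K_{1,3})$-URD$(v;5,\frac{2(v-6)}{3})$.

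The one genuinely non-routine step — and the main obstacle — is the explicit construction of the incomplete design $(K_2,K_{1,3})$-IURD$(36,12;[5,4],[0,16])$, i.e. a decomposition of $K_{36}-K_{12}$ into $5$ partial $1$-factors and $4$ partial $3$-star classes on the $24$ non-hole points together with $16$ full $3$-star classes on all $36$ points, and, in the same spirit, the small incomplete designs underlying $v\in\{36,60,84\}$; one must also check that a $4$-frame of type $12^{u}$ exists for every $u\ge 4$, any exceptional value of $u$ simply enlarging the finite list of small cases. Everything else reduces to the arithmetic bookkeeping inside Theorem~\ref{thmR3} together with Lemmas~\ref{lemmaD1} and~\ref{lemmaD3}.
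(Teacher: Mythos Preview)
Your route is logically coherent but genuinely different from the paper's, and it leaves real work undone that the paper avoids entirely.

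The paper does not use frames here at all. It quotes from \cite{KMT} a $(K_2,K_{1,3})$-URGDD$(0,\frac{2(v-12)}{3})$ of type $12^{1+2s}$, valid for every $s\ge 1$, and simply fills each group with the $(K_2,K_{1,3})$-URD$(12;5,4)$ of Lemma~\ref{lemmaD3}. That one step covers every $v\equiv 12\pmod{24}$ with $v\ge 36$, with no exceptional small cases and no incomplete designs to build.

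Your approach, by contrast, hinges on three pieces you do not actually supply: the $(K_2,K_{1,3})$-IURD$(36,12;[5,4],[0,16])$, the ad hoc designs for $v\in\{36,60,84\}$, and the existence of a $4$-frame of type $12^{u}$ for every $u\ge 4$. The last is in fact problematic: $4$-frames of type $12^{u}$ are only known for $u\ge 5$ (this is exactly why Lemma~\ref{lemmaC5} leaves $v\in\{52,76,100\}$ open), so $v=108$ would join your list of uncovered small orders. As you yourself flag, these constructions are ``the main obstacle'', and without them the argument is a plan rather than a proof. It is worth noting that the most natural way to build your missing IURD$(36,12;[5,4],[0,16])$ is to take the URGDD$(0,16)$ of type $12^{3}$ from \cite{KMT} and fill only two of its three groups --- but once you have that URGDD in hand, the frame machinery of Theorem~\ref{thmR3} becomes unnecessary and the paper's direct filling argument is both shorter and exception-free.
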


\begin{proof}
Let $v=12(2s+1)$, $s\geq0$. The case $s=0$ corresponds to a
$(K_2, K_{1,3})$-URD $(12;5,4)$ which exists by Lemma \ref{lemmaD3}.
For $s\geq 1$ start from a $(K_2, K_{1,3})$-URGDD $(0, \frac{2(v-12)}{3})$ of type $12^{1+2s}$,
which exists by Lemma 5.4 of  \cite{KMT}. Filling each group of size
12 with a copy of a $(K_2, K_{1,3})$-URD($12;5,4)$, which exists by
Lemma \ref{lemmaD3}, gives a $(K_2, K_{1,3})$-URD $(v;5, \frac{2(v-6)}{3})$.
\end{proof}

\begin{lemma}
\label{lemmaC3} For every\/ $v\equiv 8\pmod{24}$ there exists a\/
$(K_2, K_{1,3})$-URD $(v;1, $ $\frac{2(v-2)}{3})$.

\end{lemma}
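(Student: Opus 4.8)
The plan is to mimic the proof strategy used in Lemmas~\ref{lemmaC1} and~\ref{lemmaC2}, namely to realise $v\equiv 8\pmod{24}$ as $v=gtu$ (or $v=gtu+h$) with a $4$-RGDD or $4$-frame as the main ingredient, and then fill in with the small designs already constructed in Section~4. Writing $v\equiv 8\pmod{24}$, note that $v=8(3s+1)$ for some $s\ge 0$; the base case $s=0$ is $v=8$, which is handled by the $(K_2,K_{1,3})$-URD$(8;1,4)$ of Lemma~\ref{lemmaD2}. For $s\ge 1$ I would like to write $v = 8m$ with $m=3s+1\equiv 1\pmod 3$, and use a $4$-RGDD of type $4^{m}$ (which exists for $m\equiv 1\pmod 3$, $m\ge 4$, by Lemma~\ref{lemma R1}), together with Theorem~\ref{thmR2}.

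Concretely, I would apply Theorem~\ref{thmR2} with $g=4$, $t=2$, $u=m=3s+1$, so $v=gtu=8m$. The three required inputs are: (1)~a $4$-RGDD of type $4^{m}$, available from Lemma~\ref{lemma R1} for $m\equiv 1\pmod 3$, $m\ge 4$; (2)~a $(K_2,K_{1,3})$-URGDD$(r_1,s_1)$ of type $2^{4}$ — here Lemma~\ref{lemmaD1} gives the URGDD$(0,4)$ of type $2^4$, so $J_1=\{(0,4)\}$; and (3)~a $(K_2,K_{1,3})$-URD$(gt;r_2,s_2)=(8;r_2,s_2)$, for which Lemma~\ref{lemmaD2} supplies $(8;1,4)$, so we may take $J_2=\{(1,4)\}$. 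Theorem~\ref{thmR2} then yields a $(K_2,K_{1,3})$-URD$(v;r,s)$ for every $(r,s)\in J_2 + h\ast J_1$ with $h=\frac{g(u-1)}{3}=\frac{4(m-1)}{3}=\frac{4\cdot 3s}{3}=4s$. Since every element of $J_1$ equals $(0,4)$, the set $h\ast J_1$ is the single pair $(0,16s)$, and hence $J_2+h\ast J_1=\{(1,\,4+16s)\}$. It remains to check $4+16s=\frac{2(v-2)}{3}$: indeed $v=8(3s+1)=24s+8$, so $\frac{2(v-2)}{3}=\frac{2(24s+6)}{3}=16s+4$, as needed. This gives the desired URD$(v;1,\frac{2(v-2)}{3})$ for all $s\ge 1$ with $m=3s+1\ge 4$, i.e. $s\ge 1$; combined with the $s=0$ case this covers all $v\equiv 8\pmod{24}$.

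I would double-check the small end of the range to make sure the RGDD hypothesis $m\ge 4$ does not leave a gap: $s=1$ gives $m=4$ and $v=32$, which is exactly the first case covered by the $4$-RGDD of type $4^4$; and $v=8$ ($s=0$, $m=1$) is covered separately by Lemma~\ref{lemmaD2}. So there is no exceptional value here. The expected main obstacle is simply the bookkeeping in the arithmetic of $J_2+h\ast J_1$ — verifying that the single achievable pair matches $(1,\frac{2(v-2)}{3})$ exactly and that no parity or divisibility condition is violated — rather than any genuinely hard construction, since all the structural ingredients ($4$-RGDD of type $4^m$, URGDD$(0,4)$ of type $2^4$, URD$(8;1,4)$) are already in hand from Lemma~\ref{lemma R1}, Lemma~\ref{lemmaD1} and Lemma~\ref{lemmaD2} respectively. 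If for some reason the type $4^m$ RGDD route had a gap, the fallback would be to use instead a $4$-RGDD of type $3^{t}$ with $t\equiv 0\pmod 4$ (also in Lemma~\ref{lemma R1}) with a different choice of $t$-value in Theorem~\ref{thmR2}, or to invoke the frame-based Theorem~\ref{thmR3}; but I do not anticipate needing these alternatives.
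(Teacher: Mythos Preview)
Your proposal is correct and follows essentially the same approach as the paper: both treat $v=8$ via Lemma~\ref{lemmaD2} and, for $v=24s+8$ with $s\ge 1$, apply Theorem~\ref{thmR2} with $g=4$, $t=2$, $u=3s+1$ to a $4$-RGDD of type $4^{3s+1}$, using the URGDD$(0,4)$ of type $2^4$ (Lemma~\ref{lemmaD1}) and the URD$(8;1,4)$ (Lemma~\ref{lemmaD2}) as inputs. Your explicit arithmetic check that $4+16s=\tfrac{2(v-2)}{3}$ is a welcome addition but the argument is otherwise the same.
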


\begin{proof}
Let $v=8+24s$. The case $s=0$ corresponds to a $(K_2, K_{1,3})$-URD $(8;1,4)$ which exists by Lemma \ref{lemmaD2}. For $s\geq1$, start from a $4$-RGDD  of type $4^{1+3s}$  (\cite{CD}) and apply Theorem \ref{thmR2} with $t=2$
to obtain a $(K_2, K_{1,3})$-URD $(v;1, \frac{2(v-2)}{3})$
(the input designs are a $(K_2, K_{1,3})$-URGDD$(0,4)$ of type $2^{4}$ from Lemma \ref{lemmaD1} and a $(K_2, K_{1,3})$-URD($8;1,4)$ from
Lemma \ref{lemmaD2}).
\end{proof}

\begin{lemma}
\label{lemmaC4} For every\/ $v\equiv 16\pmod{24}$, with the possible exception of $v\in\{40,280,328,472\}$, there exists a\/
$(K_2, K_{1,3})$-URD $(v;3, \frac{2(v-4)}{3})$.
\end{lemma}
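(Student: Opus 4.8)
\textbf{Proof proposal for Lemma \ref{lemmaC4}.}

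The plan is to mirror the strategy used in Lemmas \ref{lemmaC1}--\ref{lemmaC3}: handle a few small base cases directly and then produce all larger values by a recursive construction. Write $v=16+24s$ with $s\geq 0$. The case $s=0$ is $v=16$, handled by Lemma \ref{lemmaD8}, and the next cases $v=40,64,88,\ldots$ are $16\pmod{24}$ as well; note $v=88$ is already covered by Lemma \ref{lemmaD13}. So the first step is to dispose of a small initial segment of values of $v$ by hand (or by the results already quoted), identifying exactly which ones resist the general construction -- these are the candidates for the stated exception set $\{40,280,328,472\}$.

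For the generic step I would use Theorem \ref{thmR2} with $g=3$, $t=2$, so that $v=3\cdot 2\cdot u=6u$ with $u=\frac{v}{6}$; since $v\equiv 16\pmod{24}$ we get $u\equiv \frac{16}{6}$, i.e. we need $u\equiv 0\pmod 4$ with $v=6u$. Here $v=16+24s$ is not divisible by $6$, so instead the natural choice is $v=6u+h$ handled via Theorem \ref{thmR3} (a $4$-frame construction): take a $4$-frame of type $6^{t}$ (Lemma \ref{lemma RF}, available for odd $t\geq 5$ outside a small exception list), expand by a factor of $2$ (so $g=6$, the ``$t$'' of Theorem \ref{thmR3} equals $2$), add a hole $H$ of size $h=16$, use the $(K_2,K_{1,3})$-URD$(16;3,8)$ of Lemma \ref{lemmaD8} to fill the hole, use the URGDD$(0,4)$ of type $2^4$ of Lemma \ref{lemmaD1} on the frame blocks, and use an incomplete design $(K_2,K_{1,3})$-IURD$(28,4;\ldots)$-type object -- more precisely the IURD with hole $16$ obtained by filling appropriately -- for ingredient $(4)$ of Theorem \ref{thmR3}. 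Then $v+h=12t+16$, and as $t$ runs over odd integers $\geq 5$ this yields all $v\equiv 16\pmod{24}$ beyond the small cases; the finitely many missing frame types $t\in\{7,23,27,35,39,47\}$ together with the very small $t$ are exactly what forces the putative exceptions $40,280,328,472$ (and possibly one or two more to be mopped up by an alternate construction, e.g. Lemma \ref{lemmaD13} for $v=88,424,568$ or a direct $4$-RGDD of type $4^{1+3s}$-style argument when available).

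Concretely the steps in order are: (i) record the parity/divisibility so that the target parameters $(3,\frac{2(v-4)}{3})$ lie in $J(v)$ (immediate from Lemma \ref{lemmaP2}); (ii) settle $v=16$ by Lemma \ref{lemmaD8}; (iii) set up Theorem \ref{thmR3} with a $4$-frame of type $6^{t}$, weight $2$, hole size $16$, checking that each of the four input designs exists -- the URD$(16;3,8)$, the URGDD$(0,4)$ of type $2^4$, the URGDD$(0,4)$ of type $2^4$ again for ingredient $(3)$, and the needed IURD on $12\cdot 2+16=40$ points with a hole of $16$; (iv) let $t$ range over the odd integers $\geq 5$ for which the frame exists, getting $v=12t+16$; (v) collect the values of $v$ not produced -- arising from small $t$ and from the excluded frame types -- and cross them off using Lemma \ref{lemmaD13} and any direct small designs, leaving $\{40,280,328,472\}$ as the genuine possible exceptions. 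The main obstacle I anticipate is ingredient $(4)$: one must actually exhibit a $(K_2,K_{1,3})$-IURD$(40,16;[3,8],[r_3,s_3])$ with $(r_3,s_3)\in \frac{6}{3}\ast J_2 = 2\ast\{(0,4)\}=\{(0,8)\}$, i.e. an IURD$(40,16;[3,8],[0,8])$; this is a genuine small-design construction that must be built explicitly (analogously to Lemmas \ref{lemmaD7}, \ref{lemmaD9}, \ref{lemmaD11}), and getting its parameters to match the arithmetic of Theorem \ref{thmR3} -- together with verifying that the excluded frame orders really do collapse to exactly the four listed exceptions -- is where the real work lies.
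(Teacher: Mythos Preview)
Your overall shape is right---Theorem \ref{thmR3} applied to a $4$-frame of type $6^{\text{odd}}$ with weight $2$ is exactly what the paper does---but your choice of hole size $h=16$ is an arithmetic slip that breaks the construction. With a frame of type $6^{u}$ ($u$ odd), weight $t=2$, and hole $h$, the output design lives on $6\cdot 2\cdot u+h=12u+h$ points. For odd $u$ this is $12+h\pmod{24}$, so $h=16$ lands you in the class $v\equiv 4\pmod{24}$, not $v\equiv 16\pmod{24}$. The correct hole size here is $h=4$: then $12u+4\equiv 16\pmod{24}$ as required.

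This wrong $h$ is also why your ingredient $(4)$ became muddled (you mention both an IURD$(28,4;\ldots)$ and an IURD$(40,16;\ldots)$, neither of which matches your own setup). With $h=4$ the required pieces are: a URD$(4;3,0)$ for ingredient $(2)$, the URGDD$(0,4)$ of type $2^{4}$ from Lemma \ref{lemmaD1} for ingredient $(3)$, and the IURD$(g t+h,h;\ldots)=\mathrm{IURD}(16,4;[3,0],[0,8])$ for ingredient $(4)$---which is precisely Lemma \ref{lemmaD7}, so no new small design needs to be built. The frame of type $6^{1+2s}$ then exists for $s\geq 2$ outside $1+2s\in\{7,23,27,35,39,47\}$, leaving $v\in\{16,40,88,280,328,424,472,568\}$ uncovered; Lemma \ref{lemmaD8} handles $16$ and Lemma \ref{lemmaD13} handles $88,424,568$, yielding exactly the residual exceptions $\{40,280,328,472\}$. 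So once you correct $h$ from $16$ to $4$, your plan coincides with the paper's proof and the ``main obstacle'' you anticipated (constructing a new IURD) disappears.
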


\begin{proof}
Let $v=16+24s$. The cases $v=16,88,424,568$ are covered by Lemmas \ref{lemmaD8} and \ref{lemmaD13}.  For $v>40$ and $v\neq 280,328,472$  start from a $4$-frame of type $6^{1+2s}$ (\cite{SG}) and apply Theorem \ref{thmR3} with $t=2$ and $h=4$ to obtain a $(K_{2}, K_{1,3})$-URD $(v;3, \frac{2(v-4)}{3} )$ (the input designs are a $(K_{2}, K_{1,3})$-URD $(4;3, 0 )$, a $(K_2, K_{1,3})$-URGDD$(0,4)$ of type $2^{4}$ from Lemma \ref{lemmaD1} and a $(K_{2},
K_{1,3})$-IURD $(16,4;[3,0],[0,8])$  from
Lemma \ref{lemmaD7}).
\end{proof}

\begin{lemma}
\label{lemmaC5} For every\/ $v\equiv 4\pmod{24}$, with the possible exception of $v\in\{52,76,100\}$, there exists a\/
$(K_2, K_{1,3})$-URD $(v;3, \frac{2(v-4)}{3})$.
\end{lemma}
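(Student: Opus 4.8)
The plan is to mimic the structure of Lemma \ref{lemmaC4}, since $v\equiv 4\pmod{24}$ and $v\equiv 16\pmod{24}$ are the two residue classes giving $r(v)=3$, and both are handled by a frame construction with a hole of size $4$. Writing $v=4+24s$, I would first dispose of the smallest cases by direct construction or by the small-case lemmas of Section 4: $v=28$ is Lemma \ref{lemmaD10}, and the genuinely exceptional small values $v\in\{52,76,100\}$ are set aside as possible exceptions (as in the statement). So the real work is for $v=4+24s$ with $s\geq 4$, i.e.\ $v\geq 100$ (and in fact $v>100$ once the exceptions are excluded).

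For those $v$ I would apply Theorem \ref{thmR3} with $g=6$, $t=2$, $h=4$, and $u=2s-1$, so that $v=gtu+h=12(2s-1)+4=24s-8$; hmm, that gives the wrong residue, so instead I would take $u$ such that $12tu+4=v$ works out — with $t=2$, $g=6$ one needs $12u+4=v$, i.e.\ $u=\frac{v-4}{12}$, which for $v\equiv 4\pmod{24}$ is odd, $u=2s+1$ say. Thus I would start from a $4$-frame of type $6^{u}$ with $u=\frac{v-4}{12}$ odd; such a frame exists by Lemma \ref{lemma RF} for $u\geq 5$ odd, with the stated possible exceptions $u\in\{7,23,27,35,39,47\}$, which correspond to certain values of $v$ that must then be listed as possible exceptions (or handled separately). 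The input designs required by Theorem \ref{thmR3} are: a $(K_2,K_{1,3})$-URD$(4;3,0)$ for the hole (trivially a $1$-factorization of $K_4$ into $3$ classes, i.e.\ $(r_1,s_1)=(3,0)$); a $(K_2,K_{1,3})$-URGDD$(0,4)$ of type $2^4$ from Lemma \ref{lemmaD1}, giving $(r_2,s_2)=(0,4)$; and a $(K_2,K_{1,3})$-IURD$(16,4;[3,0],[0,8])$ from Lemma \ref{lemmaD7}, giving $(r_3,s_3)\in J_3=\frac{g}{3}\ast J_2=2\ast\{(0,4)\}=\{(0,8)\}$. Then Theorem \ref{thmR3} produces a $(K_2,K_{1,3})$-URD$(v;r,s)$ with $(r,s)\in J_1+u\ast J_3=\{(3,0)\}+u\ast\{(0,8)\}=\{(3,8u)\}$, and $8u=8\cdot\frac{v-4}{12}=\frac{2(v-4)}{3}$, exactly as desired.

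The main obstacle is the exceptional set for the $4$-frame of type $6^u$. The values $u\in\{7,23,27,35,39,47\}$ translate into $v=12u+4\in\{88,280,328,424,472,572\}$; of these, $v=88$ and $v=424$ (and also $568$, though that is $u=47$ giving $v=568$, let me recompute: $12\cdot 47+4=568$) are already covered by Lemma \ref{lemmaD13}, which handles $v=88,424,568$ directly via a $2^{v/4}$ $4$-RGDD construction. That leaves $v\in\{280,328,472,572\}$ as genuine possible exceptions, matching the list in the Main Theorem. I also need to double-check the boundary case $u=5$ (i.e.\ $v=64$); but $64\equiv 4\pmod{12}$ and $64\equiv 16\pmod{24}$, not $4\pmod{24}$, so it is irrelevant here, and the smallest relevant $u$ with $v\equiv 4\pmod{24}$ is $u=9$ ($v=112$) after the excluded small values — so $u\geq 5$ is comfortably satisfied. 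Finally I would record explicitly that the values $52,76,100$ remain possible exceptions because neither the frame construction (the frame $6^u$ needs $u\geq 5$, forcing $v\geq 64$, and the relevant small $v\equiv4\pmod{24}$ below that threshold or just above it are not reachable) nor the small-case lemmas cover them, completing the proof.

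\begin{proof}
Write $v=4+24s$. For $s=1$, that is $v=28$, the design exists by Lemma \ref{lemmaD10}. The values $v\in\{52,76,100\}$ are listed as possible exceptions. For the remaining values $v=4+24s$ with $s\geq 5$ (so $v\geq 124$), set $u=\frac{v-4}{12}$; since $v\equiv 4\pmod{24}$, $u$ is odd, and $u\geq 10$, so in particular $u\geq 5$. By Lemma \ref{lemma RF} there exists a $4$-frame of type $6^{u}$, except possibly when $u\in\{7,23,27,35,39,47\}$, that is, except possibly when $v=12u+4\in\{88,280,328,424,472,572\}$. Of these, $v=88$ and $v=424$ are covered by Lemma \ref{lemmaD13}, while $v\in\{280,328,472,572\}$ remain as possible exceptions.

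For all other $v$, apply Theorem \ref{thmR3} with $g=6$, $t=2$, $h=4$ and $u=\frac{v-4}{12}$, so that $v=gtu+h$. As input designs take: a $(K_2,K_{1,3})$-URD$(4;3,0)$ (a $1$-factorization of $K_4$), giving $(r_1,s_1)=(3,0)\in J_1$; a $(K_2,K_{1,3})$-URGDD$(0,4)$ of type $2^{4}$ from Lemma \ref{lemmaD1}, giving $(r_2,s_2)=(0,4)\in J_2$; and a $(K_2,K_{1,3})$-IURD$(16,4;[3,0],[0,8])$ from Lemma \ref{lemmaD7}, with $(r_3,s_3)=(0,8)\in J_3=\frac{g}{3}\ast J_2=2\ast\{(0,4)\}$. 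Theorem \ref{thmR3} then yields a $(K_2,K_{1,3})$-URD$(v;r,s)$ with $(r,s)\in J_1+u\ast J_3=\{(3,8u)\}$. Since $8u=8\cdot\frac{v-4}{12}=\frac{2(v-4)}{3}$, this is the required $(K_2,K_{1,3})$-URD$(v;3,\frac{2(v-4)}{3})$.
\end{proof}
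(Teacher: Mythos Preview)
Your argument has a fatal parity error. You claim that for $v\equiv 4\pmod{24}$ the number $u=\frac{v-4}{12}$ is odd, but in fact $v-4\equiv 0\pmod{24}$, so $u=\frac{v-4}{12}$ is \emph{even}. (Indeed, every value $12u+4$ you compute from the exceptional list $u\in\{7,23,27,35,39,47\}$ lands in the class $16\pmod{24}$, not $4\pmod{24}$; that should have been a warning sign.) This is not a mere citation issue: a $4$-frame of type $6^{u}$ with $u$ even cannot exist, because each partial parallel class must partition the $6(u-1)$ points outside one group into blocks of size $4$, forcing $6(u-1)\equiv 0\pmod 4$, i.e.\ $u$ odd. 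So Lemma~\ref{lemma RF} is simply unavailable here, and your construction collapses for every $v\equiv 4\pmod{24}$ beyond $v=28$.

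The paper avoids this by using a $4$-frame of type $12^{s}$ instead (with $g=12$, $t=2$, $h=4$, $u=s$, so $v=24s+4$). Then $gt+h=28$, and the required incomplete design is the $(K_2,K_{1,3})$-IURD$(28,4;[3,0],[0,16])$ of Lemma~\ref{lemmaD9}, not the IURD$(16,4;\ldots)$ you invoke. With $J_3=\frac{12}{3}\ast\{(0,4)\}=\{(0,16)\}$ one gets $(r,s)=(3,16s)=(3,\tfrac{2(v-4)}{3})$ as required. The $4$-frame of type $12^{s}$ exists for $s\geq 5$, which is exactly why $s\in\{2,3,4\}$, i.e.\ $v\in\{52,76,100\}$, are left as possible exceptions.
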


\begin{proof}
Let $v=4+24s$. The case $v=28$ follows by Lemma \ref{lemmaD10}. For $v>100$  start from a $4$-frame of type $12^{s}$ (\cite{S3}) and apply Theorem \ref{thmR3} with $t=2$ and $h=4$
to obtain  a $(K_{2}, K_{1,3})$-URD $(v;3, \frac{2(v-4)}{3} )$
(the input designs are a $(K_{2}, K_{1,3})$-URD $(4;3, 0 )$, a $(K_2, K_{1,3})$-URGDD$(0,4)$ of type $2^{4}$ from Lemma \ref{lemmaD1} and a $(K_{2},
K_{1,3})$-IURD$(28,4;[3,0],[0,16])$   from Lemma \ref{lemmaD9}).
\end{proof}

\begin{lemma}
\label{lemmaC6} For every\/ $v\equiv 20\pmod{24}$, with the possible exception of $v\in\{44,92,284,332,428,476,572\}$, there exists a\/
$(K_2, K_{1,3})$-URD $(v;1, \frac{2(v-2)}{3})$.
\end{lemma}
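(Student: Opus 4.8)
The plan is to follow the same strategy as in Lemma~\ref{lemmaC4}, but with a hole of size $8$ in place of size $4$, so that the resulting designs carry exactly one $1$-factor. Write $v=20+24s$. For $s=0$ the claim is precisely Lemma~\ref{lemmaD12}, which yields a $(K_2,K_{1,3})$-URD$(20;1,12)$; note that here $\tfrac{2(v-2)}{3}=12$. For $v>44$ with $v\notin\{92,284,332,428,476,572\}$, I would start from a $4$-frame of type $6^{1+2s}$ (which exists in this range by Lemma~\ref{lemma RF}) and apply Theorem~\ref{thmR3} with $g=6$, $t=2$, $u=1+2s$ and $h=8$, noting that then $6\cdot 2\cdot(1+2s)+8=20+24s=v$.

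All input designs required by Theorem~\ref{thmR3} are already in hand. For condition $(2)$ we use a $(K_2,K_{1,3})$-URD$(8;1,4)$ (Lemma~\ref{lemmaD2}), so $J_1=\{(1,4)\}$; for condition $(3)$ we use the $(K_2,K_{1,3})$-URGDD$(0,4)$ of type $2^4$ (Lemma~\ref{lemmaD1}), so $J_2=\{(0,4)\}$ and hence $J_3=\tfrac{g}{3}\ast J_2=2\ast\{(0,4)\}=\{(0,8)\}$; and for condition $(4)$ we use a $(K_2,K_{1,3})$-IURD$(gt+h,h;[1,4],[0,8])=(K_2,K_{1,3})$-IURD$(20,8;[1,4],[0,8])$, which is exactly Lemma~\ref{lemmaD11}. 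Theorem~\ref{thmR3} then delivers a $(K_2,K_{1,3})$-URD$(v;r,s)$ for every $(r,s)\in J_1+u\ast J_3=\{(1,4)\}+(1+2s)\ast\{(0,8)\}=\{(1,\,12+16s)\}$, and since $12+16s=\tfrac{2(18+24s)}{3}=\tfrac{2(v-2)}{3}$, this is the design claimed.

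It remains to identify the orders not covered. By Lemma~\ref{lemma RF} a $4$-frame of type $6^{1+2s}$ exists for all odd $1+2s\geq 5$ except possibly $1+2s\in\{7,23,27,35,39,47\}$, while the case $1+2s=3$ (that is, $v=44$) lies outside the range $t\geq 5$ of that lemma entirely. Transcribing $1+2s\in\{3,7,23,27,35,39,47\}$ through $v=20+24s$ gives $v\in\{44,92,284,332,428,476,572\}$, which is exactly the exception set in the statement; for every other $v\equiv 20\pmod{24}$ the construction above applies.

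The construction itself is routine once $h=8$ and the hole is filled by the URD$(8;1,4)$ of Lemma~\ref{lemmaD2}; the only genuine obstacle is the presence of gaps in Lemma~\ref{lemma RF}. Shrinking the exception set would require either a stronger existence theorem for $4$-frames of type $6^t$ at those orders, or ad hoc constructions of $(K_2,K_{1,3})$-URD$(v;1,\tfrac{2(v-2)}{3})$ for each listed $v$ --- in particular a self-contained construction on $44$ points, for which no frame of type $6^3$ is available --- and we leave these cases open.
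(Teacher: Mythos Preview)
Your proposal is correct and follows essentially the same approach as the paper: the base case $v=20$ is handled by Lemma~\ref{lemmaD12}, and for larger $v$ you apply Theorem~\ref{thmR3} to a $4$-frame of type $6^{1+2s}$ with $t=2$ and $h=8$, using exactly the same three input designs (Lemmas~\ref{lemmaD2}, \ref{lemmaD1}, \ref{lemmaD11}). Your explicit verification of the arithmetic and your derivation of the exception list from the gaps in Lemma~\ref{lemma RF} are more detailed than the paper's own proof, but the strategy is identical.
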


\begin{proof}
Let $v=20+24s$. The case $v=20$ follows by Lemma \ref{lemmaD12}. For $v>44$ and $v\neq 92, 284, 332, 428, 476,572$  start from a $4-$frame of type $6^{1+2s}$ (\cite{S3}) and apply Theorem \ref{thmR3} with  $t=2$ and
$h=8$ to obtain a $(K_{2}, K_{1,3})$-URD $(v;1, \frac{2(v-2)}{3} )$ (the input designs are a $(K_{2}, K_{1,3})$-URD $(8;1, 4 )$ from Lemma \ref{lemmaD2}, a $(K_2, K_{1,3})$-URGDD$(0,4)$ of type $2^{4}$ from Lemma \ref{lemmaD1} and a $(K_{2}, K_{1,3})$-IURD$(20, $ $8;[1,4],[0,8])$   from Lemma \ref{lemmaD11}).
\end{proof}

Combining Lemmas \ref{lemmaC1}, \ref{lemmaC2}, \ref{lemmaC3}, \ref{lemmaC4}, \ref{lemmaC5} and \ref{lemmaC6}  we obtain the main
theorem of this article.

\begin{thm}
For each\/  $v\equiv
0\pmod{4}$, there exists a\/ $(K_2, K_{1,3})$-URD$(v; r(v), $ $\frac{2(v-1-r(v))}{3})$, with $r(v)$ as in the Table 1 and with the possible exception of  $v\in\{40,44,52,76,92,100,280,284,328,332,428,472,476,572\}$.

\end{thm}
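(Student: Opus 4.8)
The plan is to prove the Main Theorem simply by assembling the six intermediate results according to the residue of $v$ modulo $24$. Since every $v\equiv 0\pmod 4$ falls into exactly one of the congruence classes $0,4,8,12,16,20\pmod{24}$, and Lemmas \ref{lemmaC1}--\ref{lemmaC6} cover precisely these six classes, the only work left is bookkeeping: check that for each class the value of $r$ produced by the corresponding lemma agrees with $r(v)$ in Table 1, that the number of $3$-star classes is $\frac{2(v-1-r(v))}{3}$, and that the union of the exceptional sets of the six lemmas is exactly the set listed in the statement.

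First I would organize the case analysis by reducing modulo $12$, as in Table 1, and then splitting each class further modulo $24$. For $v\equiv 0\pmod{12}$: if $v\equiv 0\pmod{24}$ apply Lemma \ref{lemmaC1}, and if $v\equiv 12\pmod{24}$ apply Lemma \ref{lemmaC2}; both give $r=5$, matching $r(v)=5$, with $s=\frac{2(v-6)}{3}=\frac{2(v-1-5)}{3}$, and neither lemma has exceptions. For $v\equiv 4\pmod{12}$: if $v\equiv 4\pmod{24}$ apply Lemma \ref{lemmaC5} (exceptions $\{52,76,100\}$), and if $v\equiv 16\pmod{24}$ apply Lemma \ref{lemmaC4} (exceptions $\{40,280,328,472\}$); both give $r=3=r(v)$ with $s=\frac{2(v-4)}{3}=\frac{2(v-1-3)}{3}$. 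For $v\equiv 8\pmod{12}$: if $v\equiv 8\pmod{24}$ apply Lemma \ref{lemmaC3} (no exceptions), and if $v\equiv 20\pmod{24}$ apply Lemma \ref{lemmaC6} (exceptions $\{44,92,284,332,428,476,572\}$); both give $r=1=r(v)$ with $s=\frac{2(v-2)}{3}=\frac{2(v-1-1)}{3}$.

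Collecting the exceptional values from Lemmas \ref{lemmaC4}, \ref{lemmaC5} and \ref{lemmaC6} yields
$$\{40,280,328,472\}\cup\{52,76,100\}\cup\{44,92,284,332,428,476,572\},$$
which is exactly the set $\{40,44,52,76,92,100,280,284,328,332,428,472,476,572\}$ appearing in the statement, so the union is correct and no class contributes a stray exception. This completes the argument. There is essentially no obstacle here: the entire difficulty of the paper has been front-loaded into the small-case constructions of Section 4 and the recursive Theorems \ref{thmR2} and \ref{thmR3}, so the only thing that could go wrong at this stage is a clerical mismatch between the $r$-values or the exception lists — which the verification above rules out. One may also remark that Lemma \ref{lemmaP2} shows the pairs $(r(v),s)$ produced lie in $J(v)$, so the constructed designs are consistent with the necessary conditions.
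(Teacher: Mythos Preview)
Your proposal is correct and follows exactly the paper's approach: the paper's proof of this theorem is the single sentence ``Combining Lemmas \ref{lemmaC1}, \ref{lemmaC2}, \ref{lemmaC3}, \ref{lemmaC4}, \ref{lemmaC5} and \ref{lemmaC6} we obtain the main theorem of this article,'' and your write-up simply makes the bookkeeping behind that sentence explicit.
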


\end{document}